\documentclass[11pt]{article}

\usepackage[a4paper,margin=1in]{geometry}
\usepackage{amsmath}
\usepackage{amssymb}
\usepackage{amsthm}
\usepackage{booktabs}
\usepackage{array}
\usepackage[numbers,sort&compress]{natbib}
\usepackage{hyperref}

\newtheorem{theorem}{Theorem}
\newtheorem{proposition}{Proposition}
\newtheorem{corollary}[proposition]{Corollary}
\newtheorem{lemma}{Lemma}

\title{An Exhaustive Census of Main-Diagonal Symmetric Costas Arrays of Orders 37--42}
\author{Bogdan Dumitru\thanks{Faculty of Mathematics and Computer Science, University of Bucharest, Romania},
Cristian Budala\footnotemark[1]}
\date{}

\begin{document}

\maketitle

\begin{abstract}
We determine by exhaustive enumeration all main-diagonal symmetric Costas
arrays of orders 37 through 42.  There are 4 arrays of order 37, none of
order 38, 16 of order 39, 2 of order 40, 12 of order 41, and 4 of order
42.  All belong to known finite-field constructions.  The arrays at
orders 37--40 are Lempel
arrays over $\mathbb{F}_{41}$ or their corner deletions and augmentations;
the twelve arrays at order 41 are the Lempel arrays over
$\mathbb{F}_{43}$; and the order-42 census consists of two corner
augmentations and a reverse-complement pair in the Rickard--Golomb family.
Combined with the published census through order 36, these results
show that no sporadic main-diagonal symmetric Costas array occurs at
orders 24--42.  We also develop a collision model for random involutions.
The first two terms of its collision exponent are derived analytically,
$n^2/18+n^{3/2}/360$, with an $O(n)$ remainder.  The lower-order model
and clumping correction are empirical.  The six new censuses are
consistent with the model's prediction of a rapidly declining expected
sporadic population.
\end{abstract}

\section{Introduction}
\label{sec:intro}

An order-$n$ Costas array is an $n\times n$ permutation matrix in which the
$\binom{n}{2}$ displacement vectors between pairs of dots are pairwise
distinct \cite{Costas1984,GolombTaylor1984,Drakakis2006Review}.  Written as
a permutation $p=(p_0,\ldots,p_{n-1})$, the condition states that for every
$d\geq 1$ the differences $p_{i+d}-p_i$ are distinct.  A Costas array is
\emph{main-diagonal symmetric} if its matrix is symmetric; equivalently,
the permutation is an involution, $p\circ p=\mathrm{id}$.

The exhaustive census of several Costas-array symmetry types by Taylor,
Rickard and Drakakis \cite{RickardDrakakis2009Symmetry} reached order 36.
We call a Costas array \emph{sporadic} if it does not belong to any
construction family recognized in that census or in the classification below.
Sporadic arrays occur at orders 11 through 23, while
every array at orders 24 through 36 comes from a known finite-field
construction.  The census closed by asking more broadly whether a symmetric
Costas array of unknown origin exists above order 23.

We address the main-diagonal case by exhaustive enumeration of six orders.
The main-diagonal symmetric Costas arrays of orders 37, 38, 39 and 40 number
$4, 0, 16, 2$, and none of them is new: all are Lempel arrays over
$\mathbb{F}_{41}$ and their corner edits.  Order 38, which no field
construction reaches, is empty.  The main-diagonal symmetric Costas
arrays of order 41 are exactly the twelve Lempel arrays over
$\mathbb{F}_{43}$.  The census of order 42 contains four arrays, all
members of the family $c\,(g^{i}+g^{j})=1$ over $\mathbb{F}_{43}$: the
two corner augmentations of the Lempel family and a reverse-complement
pair in the Rickard--Golomb family
\cite{Rickard2004Periodicity,Drakakis2011OpenProblems}.  So no sporadic
main-diagonal symmetric Costas array exists at any order from 24 through 42.

The search uses three consequences of symmetry, proved in
Section~\ref{sec:lemmas}.  No two involution orbits share the sum $a+b$,
no two transpositions share the gap $b-a$, and the fixed points form a
\emph{Golomb ruler}, a set of positions with pairwise distinct
differences.

The censuses also test a heuristic model.  For a uniform random
involution, we compute the expected number of displacement collisions
after transpose-paired classes are counted once.  Section~\ref{sec:model}
proves
\[
\lambda_{\mathrm{sym}}(n)
= \frac{(n-1)(n-2)}{18}+\frac{n^{3/2}}{360}+O(n).
\]
The $O(n)$ remainder is calibrated over the measured range rather than
claimed as an asymptotic expansion.  We compare the resulting count
with exact enumeration of all involutions up to $n=18$ and with Monte
Carlo estimates up to $n=60$.  The expected collision count grows quadratically,
whereas the number of involutions $I(n)$ satisfies
$\ln I(n)\sim\tfrac12 n\ln n$.  Under the model, the expected sporadic
population therefore tends to zero.

Depending on the calibration, the model places the crossing at which the
expected sporadic population falls below one between orders 27 and 30.
The census record is consistent with this estimate: no sporadic main-diagonal
symmetric array occurs after order 23, and order 38 is empty.  For general Costas
arrays, exponential decay of the density is a theorem
\cite{WarnkeCorrellSwanson2023}; the model here is a heuristic for the
symmetric class.

Separate CPU and CUDA implementations completely enumerated orders
37--40 and returned identical array sets.  This agreement provides an
implementation-level cross-check of the census.  All solvers were
validated against the published enumeration record.
The appendices give the validation protocols, execution records, and
negative results; the companion repository contains the source code,
censuses, and compact verification records.

\section{Involutions as linear forests}
\label{sec:forest}

Fix an order-$n$ symmetric Costas array with permutation $p$.  Since the
matrix is symmetric, $p$ is an involution; it decomposes into $f$ fixed
points $p_a=a$ and $t$ transpositions $\{a,b\}$, $p_a=b$, $p_b=a$, $a<b$,
with $n=f+2t$.  In particular $f$ has the parity of $n$.

Map the array to a graph $G$ on vertex set $\{0,1,\ldots,n\}$: one edge
per orbit, the orbit $\{a,b\}$ ($a\leq b$) becoming the edge $(a,\,b+1)$.
A fixed point at $a$ is the unit edge $(a,a+1)$; a transposition of gap
$g=b-a$ is an edge of length $g+1$.  The map is a bijection between
involutions on $n$ points and the resulting edge sets.

\begin{proposition}
\label{prop:forest}
$G$ is a disjoint union of simple paths, single vertices included.  Every vertex has degree at most
two, vertices $0$ and $n$ have degree at most one, and $G$ contains no
cycle.
\end{proposition}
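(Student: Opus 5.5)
The plan is to count, for each vertex $v\in\{0,1,\ldots,n\}$, how many edges of $G$ can have $v$ as their smaller endpoint and how many as their larger endpoint. Acyclicity and the path decomposition will then follow from an extremal-vertex argument.

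\textbf{Step 1 (left and right incidences).} Every edge $(a,b+1)$ comes from an orbit $\{a,b\}$ with $a\le b$, so its left endpoint is strictly smaller than its right endpoint. The vertex $v$ is the left endpoint of an edge exactly when $v\le n-1$ and $v$ is the minimum of its own orbit. Since $v$ lies in exactly one orbit, this happens for at most one edge. Similarly, $v$ is the right endpoint of an edge exactly when $v\ge 1$ and $v-1$ is the maximum of its orbit, and $v-1$ also lies in a unique orbit, so this too happens for at most one edge. The two counts together give $\deg v\le 2$. Vertex $0$ can never be a right endpoint, and vertex $n$ can never be a left endpoint, because $n$ is not a point of the involution. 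Hence both have degree at most one.

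\textbf{Step 2 (no cycles, including repeated edges).} Suppose $G$ contained a cycle. Here we also count a $2$-cycle formed by two parallel edges, and loops cannot occur since $a<b+1$. Let $m$ be the largest vertex on the cycle. Both cycle edges at $m$ join $m$ to smaller vertices, so $m$ is the right endpoint of two distinct edges. This contradicts Step~1, so $G$ is a simple acyclic graph.

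\textbf{Step 3 (conclusion).} A finite simple graph that has maximum degree at most two and no cycle is a forest all of whose components are paths. Isolated vertices count as paths of length zero. This is the statement.

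I do not expect a real obstacle. The only point needing care is the bookkeeping in Step~1: one must use both that each point lies in exactly one orbit and that the edge map shifts the larger endpoint by one. This shift is what keeps a fixed point, or a transposition endpoint, from being counted twice at the same vertex.
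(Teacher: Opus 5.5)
Your proof is correct and follows the paper's argument. You bound the degrees by the same left/right-endpoint count, and you rule out cycles with an extremal vertex: you take the largest vertex and use the right-endpoint bound, while the paper takes the smallest and uses the left-endpoint bound, which is the mirror image. Your explicit handling of parallel edges and your closing step, that an acyclic graph of maximum degree two is a union of paths, are small additions the paper leaves implicit.
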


\begin{proof}
A vertex $v$ is the left endpoint of at most one edge (the orbit whose
minimum is $v$) and the right endpoint of at most one edge (the orbit
whose maximum is $v-1$), so degrees are at most two, and the two boundary
vertices lack one side each.  Suppose a cycle existed and let $v$ be its
smallest vertex.  Both cycle edges at $v$ go rightward, so $v$ is the left
endpoint of two edges, i.e.\ the minimum of two orbits --- impossible.
\end{proof}

The forest has $(n+f)/2$ edges and therefore determines $f$.  The reverse complement
$RC(p)_i = n-1-p_{n-1-i}$ is induced by $180^\circ$ rotation and gives
the only nontrivial action of the square's symmetry group on this class.
On $G$, it acts as the mirror reflection $v\mapsto n-v$.
Canonicalizing a search under $RC$ is
therefore the same as canonicalizing a forest under reflection; we use
this form in Section~\ref{sec:method}.

\section{Three symmetry constraints}
\label{sec:lemmas}

The dot set $S=\{(i,p_i)\}$ of a symmetric array is closed under
transposition $(i,j)\mapsto(j,i)$, and the transpose of a displacement
vector $(u,v)$ is $(v,u)$.  So displacement vectors occur in mirror pairs:
if $v$ occurs between two dots, $v^{T}$ occurs between the transposed
dots.  The fixed classes of $w\mapsto w^{T}$ give the symmetry-specific
conditions used below.

\begin{lemma}[anti-diagonal vectors are internal]
\label{lem:antidiag}
In a symmetric Costas array, every displacement vector of the form
$(u,-u)$ connects the two dots of a single transposition.  Consequently:
(a) distinct orbits have distinct center sums $a+b$; (b) distinct
transpositions have distinct gaps $b-a$.
\end{lemma}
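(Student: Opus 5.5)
The plan is to use the transpose pairing just described, combined with the Costas condition, which makes every displacement vector occur at most once as an ordered pair of dots.

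Suppose the vector $w=(u,-u)$ with $u\neq 0$ joins a dot $P=(i,p_i)$ to a dot $Q=(i+u,p_i-u)$. Transposition is a symmetry of $S$, so $P^{T}=(p_i,i)$ and $Q^{T}=(p_i-u,i+u)$ are also dots. The vector from $Q^{T}$ to $P^{T}$ is $(i-p_i+u,\;p_i-i-u)$. Compare this with $w$ itself. By the Costas condition the pair $(P,Q)$ is the unique ordered pair realizing $w$. The anti-diagonal case is exactly the fixed class, since $w^{T}=(-u,u)=-w$: the transpose of the pair $(P,Q)$ realizes $-w$, so the reversed pair $(Q^{T},P^{T})$ realizes $w$. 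Uniqueness then forces $Q^{T}=P$ and $P^{T}=Q$. So $Q=(p_i,i)$, meaning $p_i=i+u$ and $p_{i+u}=i$. Hence $P$ and $Q$ are the two dots of one transposition. Note $u\neq 0$, so $P\neq Q$ and this is not a fixed point. This proves the main claim; the only care needed is the direction bookkeeping (vector from $Q^{T}$ to $P^{T}$ rather than the reverse).

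For (a), write each orbit as $\{a,b\}$ with $a\le b$. Its dots $(a,b)$ and $(b,a)$ both lie on the anti-diagonal line $x+y=a+b$; a fixed point contributes the single dot $(a,a)$ on $x+y=2a$. Suppose two distinct orbits have the same sum $s$. Pick a dot from each on the line $x+y=s$. These dots are distinct, since distinct orbits have disjoint dot sets, so the vector between them has the form $(u,-u)$ with $u\neq 0$. By the main claim they must be the two dots of a single transposition, contradicting that they come from different orbits.

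For (b), a transposition $\{a,b\}$ with gap $g=b-a$ has internal displacement vector $(g,-g)$ from $(a,b)$ to $(b,a)$. Two distinct transpositions with equal gap would realize the same vector twice, contradicting the Costas property directly. I expect no real obstacle; the main point is recognizing that $w^{T}=-w$ on anti-diagonals, so the mirror pair collapses onto the reversed original pair.
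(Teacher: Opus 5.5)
Your proof is correct and is essentially the paper's. Transpose closure with $w^{T}=-w$ shows that $(Q^{T},P^{T})$ realizes $w$, uniqueness forces $Q=P^{T}$, and (a) and (b) then follow as in the paper, via dots on a common line $x+y=s$ and the internal vector $(g,-g)$. One slip needs fixing: the displayed vector from $Q^{T}$ to $P^{T}$ is miscomputed, since $(i-p_i+u,\;p_i-i-u)$ is $P-Q^{T}$, whereas $P^{T}-Q^{T}=(u,-u)=w$; that correct value is what your next sentence actually uses, so the argument itself stands.
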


\begin{proof}
Let $u>0$ and suppose that $w=(u,-u)$ occurs from dot $P$ to dot $Q$.
Transposition gives $Q^{T}-P^{T}=w^{T}=-w$, or equivalently
$P^{T}-Q^{T}=w$.  Thus $w$ also occurs, in increasing-row order, from
$Q^{T}$ to $P^{T}$.  Uniqueness of displacement vectors forces
$(Q^{T},P^{T})=(P,Q)$, and hence $Q=P^{T}$: the two dots belong to one
transposition.

For (a), represent two distinct orbits by $\{a,b\}$ and $\{c,d\}$, with
$a\leq b$, $c\leq d$, and $a+b=c+d$.  After exchanging the orbits if
necessary, $a<c$, and the dots $(a,b)$ and $(c,d)$ determine the vector
\[
(c-a,d-b)=(c-a,-(c-a)),
\]
which joins distinct orbits, contradicting the first part.  For (b), two
transpositions of equal gap $g$ each carry the internal vector $(g,-g)$,
again contradicting uniqueness.
\end{proof}

\begin{lemma}[diagonal vectors join fixed points]
\label{lem:diag}
Every displacement vector of the form $(u,u)$ connects two fixed points.
Consequently the fixed-point positions form a Golomb ruler: their pairwise
differences are distinct.
\end{lemma}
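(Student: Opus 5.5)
The argument mirrors the proof of Lemma~\ref{lem:antidiag}, now using the fact that the vector $(u,u)$ is fixed by transposition rather than negated by it. Let $u>0$ and suppose $w=(u,u)$ occurs from dot $P$ to dot $Q$, with $P$ in the earlier row. Transposition gives $Q^{T}-P^{T}=w^{T}=w$. Since $Q^{T}$ lies in the later row, $w$ also occurs from $P^{T}$ to $Q^{T}$ in increasing-row order. By uniqueness of displacement vectors, $(P^{T},Q^{T})=(P,Q)$. Hence $P=P^{T}$ and $Q=Q^{T}$, so both dots lie on the main diagonal and are fixed points of $p$.

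One point needs care: the Costas condition concerns \emph{ordered} displacements between pairs of dots. I will therefore phrase uniqueness as ``each nonzero vector arises from at most one ordered pair $(P,Q)$ with $Q-P$ equal to it.'' This is equivalent to the row-difference formulation in the introduction, because a vector with positive first coordinate fixes the order of the pair.

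For the Golomb-ruler consequence, suppose fixed points $a<b$ and $c<d$ satisfy $b-a=d-c=u$ and $\{a,b\}\neq\{c,d\}$. The dots $(a,a)\to(b,b)$ and $(c,c)\to(d,d)$ then both carry the vector $(u,u)$, and they come from distinct ordered pairs with $a\neq c$. This contradicts the Costas property. So all pairwise differences of fixed-point positions are distinct.

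There is no real obstacle here. The only delicate step is the orientation bookkeeping in the first paragraph: we need that transposition preserves the increasing-row order of the pair, so that the same vector, rather than its negative, is duplicated. That holds because the first coordinate of $w^{T}=w$ is $u>0$. The Golomb part does not even need the first claim, but stating it shows that diagonal vectors never involve transpositions, which is what the search uses.
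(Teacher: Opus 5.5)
Your proposal is correct and follows the paper's proof essentially step for step. Transpose-invariance of $(u,u)$, with the increasing-row order preserved, plus uniqueness of displacement vectors forces $P=P^{T}$ and $Q=Q^{T}$, and two equal spacings among fixed points would repeat the vector $(u,u)$. Your explicit ordered-pair formulation of uniqueness and your check that $a\neq c$ are harmless clarifications that the paper leaves implicit.
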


\begin{proof}
Let $u>0$ and suppose that $w=(u,u)$ occurs from dot $P$ to dot $Q$.
Since $w^{T}=w$, the same vector occurs from $P^{T}$ to $Q^{T}$, also in
increasing-row order.  Uniqueness of displacement vectors forces
$(P^{T},Q^{T})=(P,Q)$.  Hence $P=P^{T}$ and $Q=Q^{T}$, so both dots lie on
the main diagonal.  Two pairs of fixed points with equal spacing would
repeat the corresponding vector $(u,u)$; therefore their positions form
a Golomb ruler.  The Golomb-ruler property for diagonal dots of arbitrary
Costas arrays is also known
\cite{DrakakisGowOCarroll2006,Drakakis2009GolombRulers,
Drakakis2011OpenProblems}; symmetry gives
the stronger assertion proved here that a diagonal vector cannot join
off-diagonal dots.
\end{proof}

For vectors with $u\neq\pm v$, transposition pairs two distinct vector
classes.  Proposition~\ref{prop:complete} shows that the only
self-conjugate classes are represented by $(u,u)$ and $(u,-u)$.

\begin{proposition}[transpose-orbit classification]
\label{prop:complete}
For a main-diagonal symmetric permutation, transpose partitions the
displacement-vector classes, with a vector and its negative identified,
into two-element orbits except for the classes represented by $(u,u)$ and
$(u,-u)$.  The Costas conditions for the two classes in a two-element
orbit are equivalent.  The conditions for the exceptional classes are
exactly those of Lemmas~\ref{lem:antidiag} and~\ref{lem:diag}: distinct
orbit center sums, distinct transposition gaps, and the fixed-point Golomb
ruler.
\end{proposition}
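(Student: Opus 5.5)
We work with displacement-vector classes: a class is an unordered pair $\{w,-w\}$ with $w=(u,v)$, $u\neq 0$. We normalize $u>0$, and since $p$ is a permutation also $v\neq 0$. Transposition $T:(u,v)\mapsto(v,u)$ commutes with negation, so it acts on classes and is an involution there. The plan has four steps: determine the fixed classes, show the orbit condition is invariant under $T$, identify the conditions at the exceptional classes, and confirm that the whole Costas condition decomposes over orbits.

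\emph{Step 1 (fixed classes).} A class $\{w,-w\}$ is $T$-fixed iff $(v,u)=\pm(u,v)$. This means either $v=u$, or $v=-u$ (the case $(v,u)=-(u,v)$). Every other class has $T$-image distinct from itself, so $T$ partitions the remaining classes into two-element orbits.

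\emph{Step 2 (equivalence within an orbit).} The Costas condition for a class $C$ says that at most one ordered pair of distinct dots $(P,Q)$ realizes $Q-P\in C$. The dot set $S$ is closed under $P\mapsto P^{T}$, and $Q-P=w$ iff $Q^{T}-P^{T}=w^{T}$. Hence $(P,Q)\mapsto(P^{T},Q^{T})$ is a bijection from realizations of $C$ onto realizations of $T(C)$, compatible with the identification of a pair with its reverse. So the two counts are equal, and $C$ is collision-free iff $T(C)$ is. In particular, for a two-element orbit it suffices to impose the condition on one representative.

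\emph{Step 3 (exceptional classes).} This is the main point. We must show that collision-freeness at all classes $(u,u)$ and $(u,-u)$ is equivalent to the three listed conditions. Necessity is exactly Lemmas~\ref{lem:antidiag} and~\ref{lem:diag}.

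For sufficiency at $(u,u)$, take a realization $P\to Q$. As in the proof of Lemma~\ref{lem:diag}, the pair $(P^{T},Q^{T})$ is another realization. Suppose two distinct realizations exist. Using only the Golomb condition, we cannot immediately invoke uniqueness, so we argue directly. Write $P=(i,p_i)$ and $Q=(i+u,p_i+u)$. Then $p_{i+u}-(i+u)=p_i-i$, so $i+p_i$ and $(i+u)+p_{i+u}$ differ by $2u$, while $i-p_i=(i+u)-p_{i+u}$.

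For the anti-diagonal classes the direct approach fails, so instead we translate in terms of orbit sums. A dot $(i,p_i)$ lies on its orbit $\{a,b\}$, and $i+p_i=a+b$ is the orbit's center sum. A vector $(u,-u)$ between dots means equal $i+p_i$, so the two dots have equal center sums. By distinct center sums they lie in the same orbit, and the vector is then the internal vector $(b-a,a-b)$ of a transposition. Distinct gaps make this realization unique, giving sufficiency at $(u,-u)$.

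For $(u,u)$, the vector means equal $i-p_i$. The quantity $i-p_i$ equals $\pm$ the gap of the dot's orbit, and is $0$ exactly at fixed points. If the common value is nonzero, say $g>0$, then both dots are the lower-left dot $(b,a)$ of transpositions with gap $g$, hence lie in the same transposition by distinct gaps. But a single transposition has only one such dot, so the two dots coincide, a contradiction. Similarly for $g<0$. So the common value is $0$, both dots are fixed points, and the Golomb ruler property makes the realization unique.

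\emph{Step 4 (conclusion).} Each Costas condition is attached to exactly one class, and the classes are partitioned by the $T$-orbits. Combining Step 2 with Step 3 therefore gives the full equivalence. The only genuinely delicate point is Step 3: the invariants $i+p_i$ and $i-p_i$ must be used so that sufficiency does not circularly rely on the Costas property itself.
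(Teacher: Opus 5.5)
Your proof is correct. For the two-element orbits it is the paper's argument: the map $(P,Q)\mapsto(P^{T},Q^{T})$ is a bijection between the occurrence sets of a class and its mirror.

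You go beyond the paper at the self-conjugate classes. The paper handles them by citing Lemmas~\ref{lem:antidiag} and~\ref{lem:diag}. Their proofs derive the center, gap and ruler constraints from uniqueness at those classes, so the reverse implication implicit in ``exactly'' is never argued. Your Step~3 supplies that converse using the invariants $i+p_i$ (the orbit center sum) and $i-p_i$ (the signed gap, zero exactly at fixed points):
\begin{itemize}
\item For $(u,-u)$, distinct center sums force any occurrence inside a single transposition, and distinct gaps then make it unique.
\item For $(u,u)$, distinct gaps exclude an occurrence between off-diagonal dots, and the ruler handles the fixed points.
\end{itemize}

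This buys two things the paper's version leaves invisible. First, it justifies using the three constraints as a complete replacement for the self-conjugate Costas checks. Second, it shows the correspondence holds only for the conditions taken together, not class by class. Sufficiency at $(u,u)$ needs the gap condition: two transpositions $\{a,b\}$, $\{c,d\}$ of equal gap with $a<c$ give two occurrences of $(c-a,c-a)$, even when the ruler condition holds.

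Two clean-ups before writing it up. Delete the aborted first paragraph of Step~3 on $(u,u)$; it reaches no conclusion and then misleadingly says the direct approach fails ``for the anti-diagonal classes.'' In Step~2, say ``at most one unordered pair'' rather than ``ordered pair.''
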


\begin{proof}
Transposition sends an occurrence of $w=(u,v)$ to an occurrence of
$w^{T}=(v,u)$.  For $v=\pm u$ the vector class is its own mirror,
occurrences pair within the class, and the two lemmas describe the forced
structure --- the three constraints.  For $v\neq\pm u$, the classes
represented by $w$ and $w^{T}$ are distinct.  Transposition gives a
bijection between their occurrence sets, so their Costas uniqueness
conditions are equivalent.  The only self-conjugate classes are therefore
those covered by the two lemmas.
\end{proof}

Related prior work includes the mirror-pair analysis of Jedwab and
Wodlinger \cite{JedwabWodlinger2014}.  They study mirror pairs $(w,h)$,
$(w,-h)$ in arbitrary and $G$-symmetric Costas arrays through vertical
reflection, whereas the lemmas above use transpose closure for
main-diagonal symmetry.  The fixed-point Golomb-ruler consequence was
observed earlier by Drakakis, Gow and O'Carroll
\cite{DrakakisGowOCarroll2006}.  See also the difference-based structural
constraints of \cite{CorrellSwanson2023}.  The ruler condition implies that the
difference $1$ occurs at most once among the fixed points.  Thus a
symmetric Costas array never contains three consecutive fixed points and
contains at most one adjacent pair.  At orders 41 and 42 we use the
stronger ruler bound below.

\begin{corollary}
\label{prop:ruler}
The number of fixed points of an order-$n$ symmetric Costas array is at
most the largest order of a Golomb ruler of span $n-1$; in particular
$f=O(\sqrt n)$.  For $n=41$ and $n=42$ the bound is $f\leq 8$, leaving the
classes $f\in\{1,3,5,7\}$ at order 41 and $f\in\{0,2,4,6,8\}$ at order 42.
\end{corollary}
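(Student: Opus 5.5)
The corollary follows directly from Lemma~\ref{lem:diag}, and the plan is to chain that lemma with standard facts about Golomb rulers. By Lemma~\ref{lem:diag}, the fixed-point positions $F\subseteq\{0,1,\ldots,n-1\}$ of an order-$n$ symmetric Costas array have pairwise distinct differences. The span $\max F-\min F$ is at most $n-1$, so $F$ is a Golomb ruler of span at most $n-1$. Translating $F$ so that its minimum is $0$ leaves every difference unchanged. Hence $f=|F|$ is at most the maximum number of marks of a Golomb ruler of length at most $n-1$. This maximum is nondecreasing in the length, so it suffices to bound rulers of length exactly $n-1$.

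For the asymptotic statement $f=O(\sqrt n)$ I will use the elementary counting argument. The $\binom f2$ positive differences of $F$ are distinct integers in $\{1,\ldots,n-1\}$. Therefore $f(f-1)/2\le n-1$, which gives $f\le \tfrac12+\sqrt{2n-7/4}=O(\sqrt n)$. This bound alone gives $f\le 9$ for $n=41,42$, since $9\cdot 8/2=36\le 40$ while $10\cdot 9/2=45>41$. It is not quite sharp enough.

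The main step is to sharpen this to $f\le 8$ at $n=41,42$. For that I will invoke the known table of optimal Golomb rulers: the shortest $9$-mark ruler has length $44$ (e.g.\ $\{0,1,5,12,25,27,35,41,44\}$). This value was established by exhaustive computer search and is part of the standard literature. Since $44>41\ge n-1$, no $9$-mark ruler fits in span $n-1$, so $f\le 8$. I could also cite that the optimal $8$-mark length is $34\le 40$, showing the bound is not vacuous, though the corollary does not require this. If a self-contained argument were wanted here, I expect this to be the obstacle. Ruling out $9$-mark rulers of length $\le 41$ by hand is not feasible, because the counting bound leaves slack ($36$ differences among $41$ slots). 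I would therefore rely on the published optimal-ruler value rather than attempt a manual proof.

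Finally I combine this with the parity constraint from Section~\ref{sec:forest}: $n=f+2t$, so $f\equiv n\pmod 2$. At $n=41$, the values $f\le 8$ with $f$ odd give $f\in\{1,3,5,7\}$. At $n=42$, the values with $f$ even give $f\in\{0,2,4,6,8\}$. This completes the stated case split.
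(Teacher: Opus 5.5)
Your proposal is correct and follows the paper's proof step for step: Lemma~\ref{lem:diag} makes the fixed points a Golomb ruler inside $\{0,\ldots,n-1\}$, the count $\binom f2\le n-1$ gives $f=O(\sqrt n)$ but only $f\le 9$ at $n=41,42$, the optimal nine-mark length $44$ lowers this to $f\le 8$, and the parity of $f$ with $n$ yields the stated classes. One small point: your sentence reducing to rulers of length exactly $n-1$ does not follow from the monotonicity you cite, since that monotonicity is trivial for the length-at-most maximum and says nothing about exact length; the reduction is also unnecessary, because your numerical argument only uses that no nine-mark ruler has length $\le 41$.
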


\begin{proof}
Let $0\leq x_1<\cdots<x_f\leq n-1$ be the fixed-point positions.
Lemma~\ref{lem:diag} shows that the $\binom{f}{2}$ positive differences
$x_j-x_i$, $i<j$, are distinct.  Each belongs to
$\{1,\ldots,n-1\}$, so
\[
\binom{f}{2}\leq n-1.
\]
It follows that $f=O(\sqrt n)$.

For the two orders considered here, the elementary inequality still
allows nine fixed points.  The optimal nine-mark Golomb ruler has span
44 \cite{Dimitromanolakis2002GolombRulers}.  At order 41
the fixed-point positions lie in $\{0,\ldots,40\}$, of span 40, and at
order 42 they lie in $\{0,\ldots,41\}$, of span 41.  Thus neither order
can have nine fixed points, and $f\leq8$.  Finally, $n=f+2t$ implies that
$f$ has the parity of $n$, giving the stated classes.
\end{proof}

\section{A first moment for symmetric Costas arrays}
\label{sec:model}

Silverman et al.\ fitted a probabilistic estimate for the number of Costas
arrays to enumeration data in 1988 \cite{SilvermanVickersMooney1988}.
Warnke et al.\ later proved that the density $C(n)/n!$ decays exponentially
\cite{WarnkeCorrellSwanson2023}.  This section derives the expected collision
count for a uniform random involution and uses a zero-collision approximation
to model the sporadic count.  The quadratic and $n^{3/2}$ terms of the
collision exponent are derived below.  The probability approximation,
linear-scale refinement and clumping correction are heuristic or fitted.

\subsection{Mirror pairing and the exponent}

Let $p$ be a uniform random involution.  For each class $(k,\delta)$, with
positive row stride $k$ and signed column difference $\delta$, let its
occupancy be the number of index pairs that produce that displacement.
Let $W$ be the sum, over all classes, of the number of pairs of
occurrences in the class.  Displacements are identified with their
negatives when their orientation is normalized to positive row stride.
Transpose closure pairs the classes: a collision in $(k,\delta)$ occurs
if and only if the corresponding collision occurs in the normalized
class represented by $(\delta,k)$.  The two are the same event exactly
for the self-conjugate classes $\delta=\pm k$ ---
which by Lemmas~\ref{lem:antidiag} and~\ref{lem:diag} are the center, gap
and ruler constraints.  Write $W_{\mathrm{full}}$ for $W$ and
$W_{\mathrm{self}}$ for its restriction to the self-conjugate classes.
Since the two classes in each non-self-conjugate mirror pair have identical
collision counts, we count the pair once and retain each self-conjugate class
in full:
\[
\lambda_{\mathrm{sym}}
= \mathbb{E}\!\left[\tfrac{1}{2}\bigl(W_{\mathrm{full}}-W_{\mathrm{self}}\bigr)
+ W_{\mathrm{self}}\right].
\]

\subsection{Bulk term}

\begin{lemma}
\label{lem:involution-ratio}
Let $I_n$ be the number of involutions on $n$ labeled points.  For every
fixed nonnegative integer $r$,
\[
\frac{I_{n-r}}{I_n}
=n^{-r/2}\left(1-\frac{r}{2\sqrt n}+O(n^{-1})\right).
\]
\end{lemma}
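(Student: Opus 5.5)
The plan is to work with the ratio of consecutive involution counts, $\rho_n := I_{n}/I_{n-1}$, and then telescope:
\[
\frac{I_{n-r}}{I_n}=\prod_{j=0}^{r-1}\frac{1}{\rho_{n-j}} .
\]
Since $r$ is fixed, it suffices to show
\[
\rho_m=\sqrt m+\tfrac12+O(m^{-1/2}).
\]
Given this, each factor is
\[
\frac{1}{\rho_{n-j}}=n^{-1/2}\bigl(1-\tfrac{1}{2\sqrt n}+O(n^{-1})\bigr),
\]
because $\sqrt{n-j}=\sqrt n\,(1+O(j/n))$ for bounded $j$. Multiplying $r$ such factors gives $n^{-r/2}\bigl(1-\tfrac{r}{2\sqrt n}+O(n^{-1})\bigr)$, which is the claim.

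To get the expansion of $\rho_m$, I would start from the standard recurrence $I_m=I_{m-1}+(m-1)I_{m-2}$. It comes from removing point $m$, which is either a fixed point or paired with one of the other $m-1$ points. Dividing by $I_{m-1}$ gives
\[
\rho_m=1+\frac{m-1}{\rho_{m-1}}.
\]
First I need crude bounds. By induction, $\sqrt{m}\le\rho_m\le\sqrt m+1$ for $m\ge1$; the base cases $I_0=I_1=1$, $I_2=2$ start it off, and I would check the small-$m$ cases by hand. Next I insert the ansatz $\rho_m=\sqrt m+\tfrac12+\varepsilon_m$ into the recurrence. Expanding $(m-1)/(\sqrt{m-1}+\tfrac12+\varepsilon_{m-1})$ to second order, the constant terms cancel, since
\[
1+\frac{m-1}{\sqrt{m-1}+\tfrac12}=\sqrt{m}+\tfrac12+O(m^{-1/2}).
\]
What remains is a linear contraction,
\[
\varepsilon_m=-\varepsilon_{m-1}\bigl(1+O(m^{-1/2})\bigr)+O(m^{-1/2}).
\]

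This step is the main obstacle. The multiplier on $\varepsilon_{m-1}$ is close to $-1$, not strictly less than $1$ in absolute value. So a naive induction only shows that the error stays bounded, not that it is $O(m^{-1/2})$. The first fix I would try is to iterate the recurrence twice:
\[
\rho_m=1+\frac{m-1}{1+\dfrac{m-2}{\rho_{m-2}}}.
\]
The two-step map has derivative with respect to $\rho_{m-2}$ equal to $1-c/\sqrt m$ with $c>0$, and this damping yields $\varepsilon_m=O(m^{-1/2})$ by a routine induction on even and odd $m$ separately.

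If that bookkeeping becomes messy, the fallback is the known saddle-point asymptotic
\[
I_n\sim\frac{1}{\sqrt2}\,n^{n/2}e^{-n/2+\sqrt n-1/4}\bigl(1+O(n^{-1/2})\bigr).
\]
It comes from the generating function $e^{x+x^2/2}$, for example via Moser--Wyman or Knuth's treatment, and it holds with a full asymptotic series in $n^{-1/2}$. Using it requires the relative correction to be a genuine series, so that the $O(n^{-1/2})$ terms cancel in the ratio up to $O(n^{-1})$. Taking the logarithm of $I_{n-r}/I_n$ then gives
\[
-\tfrac r2\ln n-\tfrac r2+\tfrac r2-\tfrac{r}{2\sqrt n}+O(n^{-1}).
\]
The $-\tfrac r2$ and $+\tfrac r2$ come from $\tfrac{n-r}{2}\ln(n-r)-\tfrac n2\ln n$ and from $e^{r/2}$. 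The $-\tfrac{r}{2\sqrt n}$ comes from $\sqrt{n-r}-\sqrt n$. Exponentiating gives the lemma directly.
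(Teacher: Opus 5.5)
Your main route is the paper's proof written in the reciprocal variable $\rho_m=1/q_m$: the paper also composes the recurrence twice, uses that the two-step map contracts by $1-2/\sqrt n+O(n^{-1})$, and inducts separately over even and odd indices, so the Moser--Wyman fallback is not needed. The one thing your ``routine induction'' must still check is that the two-step defect of the ansatz $\sqrt m+\tfrac12$ is $O(m^{-1})$ and not merely $O(m^{-1/2})$, since with damping $2/\sqrt m$ an $O(m^{-1/2})$ forcing term would only give $\varepsilon_m=O(1)$. It is $O(m^{-1})$, because the one-step defects $\approx-1/(4\sqrt m)$ cancel under the multiplier $\approx-1$; this is the counterpart of the paper's estimate $G_n(a_{n-2})=a_n+3/(4n^2)+O(n^{-5/2})$.
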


\begin{proof}
Put $q_n=I_{n-1}/I_n$.  The recurrence
$I_n=I_{n-1}+(n-1)I_{n-2}$ gives
\[
q_n=\frac{1}{1+(n-1)q_{n-1}}.
\]
Applying this recurrence twice gives
\[
q_n=G_n(q_{n-2}),
\qquad
G_n(x)=\frac{1+(n-2)x}{n+(n-2)x}.
\]
Set $a_n=n^{-1/2}-1/(2n)$.  Direct expansion gives
\[
G_n(a_{n-2})=a_n+\frac{3}{4n^2}+O(n^{-5/2}),
\]
while the exact identity
\[
G_n(x)-G_n(y)
=\frac{(n-1)(n-2)(x-y)}
{[n+(n-2)x][n+(n-2)y]}
\]
shows that, uniformly for $x=a_{n-2}+O(n^{-3/2})$,
\[
G_n(x)-G_n(a_{n-2})
=\left(1-\frac{2}{\sqrt n}+O(n^{-1})\right)(x-a_{n-2}).
\]
Choose a sufficiently large $N$, and then choose $C$ so that the bound
$|q_m-a_m|\leq Cm^{-3/2}$ holds for $m=N,N+1$.  If it holds at $n-2$,
the two estimates above give
\[
|q_n-a_n|
\leq
\left(1-\frac{2}{\sqrt n}+O(n^{-1})\right)C(n-2)^{-3/2}
+\frac{3}{4n^2}+O(n^{-5/2})
\leq Cn^{-3/2}
\]
for sufficiently large $N$ and $C$.  Induction on the even and odd
indices therefore gives
\[
q_n=n^{-1/2}-\frac{1}{2n}+O(n^{-3/2}).
\]
Since $r$ is fixed,
$I_{n-r}/I_n=q_nq_{n-1}\cdots q_{n-r+1}$, and multiplying these $r$
expansions proves the claim.
\end{proof}

\begin{proposition}
\label{prop:bulk}
\[
\mathbb{E}[W_{\mathrm{full}}]
=\frac{n^2}{9}+\frac{n^{3/2}}{180}+O(n).
\]
\end{proposition}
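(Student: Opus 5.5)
The plan is to write $W_{\mathrm{full}}$ as a sum of indicators over quadruples of indices and compute the expectation configuration by configuration. Each configuration is controlled by Lemma~\ref{lem:involution-ratio}. Concretely,
\[
W_{\mathrm{full}}=\sum_{k\ge1}\ \sum_{\{i,j\},\,i\ne j}\mathbf 1\bigl[p_{i+k}-p_i=p_{j+k}-p_j\bigr],
\]
with $0\le i,j\le n-1-k$. I split the pairs $\{i,j\}$ into \emph{overlapping} ones ($|i-j|=k$, so the index set $A=\{i,i+k,j,j+k\}$ has three elements) and \emph{generic} ones ($|A|=4$). There are $\sum_k O(n-2k)=O(n^2)$ overlapping pairs, and each collision event has probability $O(1/n)$. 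The probability bound follows by conditioning on the images and using $I_{n-r}/I_n=O(n^{-r/2})$. So overlapping pairs contribute $O(n)$, and only the $\sim n^3/6$ generic quadruples matter.

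For a generic quadruple I classify the restriction of $p$ to $A$ by its \emph{pattern}.

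\textbf{Type 0.} All four points of $A$ are mapped to distinct values outside $A$. Then for any admissible target tuple $y$ the probability is exactly $I_{n-8}/I_n$, which by Lemma~\ref{lem:involution-ratio} with $r=8$ equals $n^{-4}(1-4n^{-1/2}+O(n^{-1}))$.

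\textbf{Type 1.} Exactly one point of $A$ is fixed and the other three go to distinct outside values. The probability is $I_{n-7}/I_n=n^{-7/2}(1+O(n^{-1/2}))$.

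\textbf{All other patterns.} These are two or more fixed points, a transposition inside $A$, or coincidences between images and $A$. Each costs at least a further factor $n^{-1/2}$ relative to type 1, or a factor $n^{-1}$ relative to type 0. A crude count of target tuples then shows their total contribution is $O(n)$.

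For type 0, I count tuples $(y_1,\dots,y_4)$ of distinct values outside $A$ (and outside its image constraints) with $y_2-y_1=y_4-y_3$. This count is $\sum_d (n-|d|)^2+O(n^2)=\tfrac23 n^3+O(n^2)$ and is independent of the positions. Type 0 therefore contributes
\[
\tfrac{n^3}{6}\cdot\tfrac23 n^{-1}\bigl(1-4n^{-1/2}\bigr)+O(n)=\tfrac{n^2}{9}-\tfrac{4}{9}n^{3/2}+O(n).
\]
The $O(n^2)$ slack in the tuple count only affects the $O(n)$ term.

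For type 1, suppose for instance that $i$ is fixed. The event becomes $p_{i+k}-i=p_{j+k}-p_j$, and the number of solutions in the three free values is $n^2\,\phi(i/n)+O(n)$. Here $\phi(x)$ is the continuum density of $Y_2-Y_1$ evaluated at the pinned value: $\phi(x)$ is the length of $\{y\in[0,1]: y-x\in[-1,1]\}$, suitably normalized. Summing over the four choices of which index is fixed and over all $(i,j,k)$ turns the type-1 contribution into $n^{3/2}$ times an explicit triple integral over the simplex of $(i/n,j/n,k/n)$.

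I expect evaluating this integral to be the main obstacle. It involves piecewise-linear densities over the region $i,j\le n-1-k$, and the pinned value differs for the four choices of fixed index (left versus right endpoints enter with opposite signs). The target is that it equals $\tfrac49+\tfrac1{180}$, so that it cancels the $-\tfrac49 n^{3/2}$ from type 0 and leaves $\tfrac{1}{180}n^{3/2}$. I would first compute it by splitting the simplex along the lines where the pinned value crosses the breakpoints of the density, checking the result against the exact enumeration up to $n=18$. I would then confirm that every Riemann-sum and boundary error in this step, and in the discarded patterns, is $O(n)$.
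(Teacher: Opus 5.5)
Your decomposition is the paper's: eight-vertex specifications giving $\tfrac19n^2-\tfrac49n^{3/2}$, seven-vertex specifications with one fixed row, and everything else absorbed into $O(n)$. Your handling of type~0, the overlapping comparisons and the residual patterns is sound. The gap is type~1, which is the only place the $n^{3/2}$ coefficient is decided.

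You do not compute that coefficient. You state that the integral should equal $\tfrac49+\tfrac1{180}=\tfrac9{20}$, which is exactly what the proposition asserts, so the argument is circular at its one nontrivial step. Comparing with exact data at $n\le18$ cannot substitute for the computation: there $n^{3/2}/180<0.5$, which cannot be separated from the uncontrolled $O(n)$ term.

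The density you describe is also wrong as written. The length of $\{y\in[0,1]:y-x\in[-1,1]\}$ is $1$ for every $x\in[0,1]$. With $\phi\equiv1$ the type-1 coefficient would be $4\cdot\tfrac16=\tfrac23$, and the final coefficient would be $\tfrac29$ instead of $\tfrac1{180}$. The result is a near-cancellation, $-\tfrac{80}{180}+\tfrac{81}{180}$, so the exact form of $\phi$ carries the whole second term.

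The missing computation is short, and the obstacle you anticipate is not there. Suppose the fixed row is $x=i$. For each value $y$ of $p_{i+k}$, the pairs $(p_j,p_{j+k})$ with difference $d=y-x$ number $n-|d|$. So the count is $F_n(x)=\sum_{d=-x}^{n-1-x}(n-|d|)=\tfrac{n^2}{2}+nx+\tfrac n2-x^2-x$.

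When the fixed row is a right endpoint, the substitution $d\mapsto-d$ gives the same $F_n(x)$, because $n-|d|$ is even. There is therefore no sign issue. The window $[-x,n-1-x]$ always contains the kink at $d=0$, so $F_n$ is a single polynomial on $0\le x\le n-1$ and no splitting of the simplex is needed. In continuum form, the correct density is $\phi(x)=\int_0^1(1-|y-x|)\,dy=\tfrac12+x-x^2$.

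Now sum over $1\le k\le n-1$ and $0\le i<j\le n-k-1$. The leading coefficients are $\tfrac{13}{120}$ when the fixed row is $i$ or $j+k$, and $\tfrac7{60}$ when it is $i+k$ or $j$, for a total of $\tfrac9{20}n^5$. Multiplying by $I_{n-7}/I_n=n^{-7/2}(1+O(n^{-1/2}))$ gives $\tfrac9{20}n^{3/2}+O(n)$, and $-\tfrac49+\tfrac9{20}=\tfrac1{180}$. This is how the paper proves the proposition; with this step inserted, your proof is complete.
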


\begin{proof}
For a fixed stride $k$, a collision comparison has the form
\[
p(i+k)-p(i)=p(j+k)-p(j),\qquad i<j.
\tag{1}
\]
There are
$\sum_{k=1}^{n-1}\binom{n-k}{2}=\binom n3$ such comparisons.  The cases
$j=i+k$, in which only three row positions occur, number $O(n^2)$ and
will be included in the remainder.

\emph{Eight vertices.}
First suppose that the four row positions and their four images are
disjoint.  For each row comparison, the unrestricted number of image
quadruples satisfying (1) is
\[
\sum_{d\ne0}(n-|d|)^2
=\frac{n(n-1)(2n-1)}{3}
=\frac23n^3+O(n^2).
\]
Requiring the images to be distinct and disjoint from the rows removes
only $O(n^2)$ quadruples.  Hence the number $A_8(n)$ of compatible
eight-vertex specifications is
\[
A_8(n)=\frac19n^6+O(n^5).
\]
Every such partial involution extends in $I_{n-8}$ ways, so
Lemma~\ref{lem:involution-ratio} gives
\[
A_8(n)\frac{I_{n-8}}{I_n}
=\frac19n^2-\frac49n^{3/2}+O(n).
\tag{2}
\]

\emph{Seven vertices.}
A compatible specification on exactly seven vertices has one fixed row
position and pairs the other three rows with external vertices.  If the
fixed position is $x$, first ignore assignments that reuse a row or an
image.  The number of assignments of the other three images is
\[
F_n(x)=\sum_{d=-x}^{n-1-x}(n-|d|)
=\frac{n^2}{2}+nx+\frac n2-x^2-x.
\tag{3}
\]
We now sum this polynomial over
$1\leq k\leq n-1$ and $0\leq i<j\leq n-k-1$.  The standard sums of
powers give
\begin{align*}
\sum_{k,i,j}F_n(i)
=\sum_{k,i,j}F_n(j+k)
&=\frac{n(n-1)(n-2)(13n^2-n+6)}{120},\\
\sum_{k,i,j}F_n(i+k)
=\sum_{k,i,j}F_n(j)
&=\frac{n(n-1)(n-2)(7n^2+n+4)}{60}.
\end{align*}
Thus the four possible fixed positions give
\[
\frac{n(n-1)(n-2)(27n^2+n+14)}{60}
=\frac9{20}n^5+O(n^4).
\]
For each row comparison, forbidding a reused row or image removes only
$O(n)$ assignments; the comparisons with an overlapping row contribute
only $O(n^4)$ to the displayed unrestricted sum.  Therefore the number
of compatible seven-vertex specifications is
\[
A_7(n)=\frac9{20}n^5+O(n^4).
\]
Lemma~\ref{lem:involution-ratio} now gives
\[
A_7(n)\frac{I_{n-7}}{I_n}
=\frac9{20}n^{3/2}+O(n).
\tag{4}
\]

\emph{Remaining patterns.}
For four distinct row positions, let $f$ be the number that are fixed,
$t$ the number of internal transpositions among them, and $e$ the number
paired with external vertices.  Then
\[
f+2t+e=4,
\qquad\text{and the partial involution uses }4+e\text{ vertices}.
\]
The cases $e=4$ and $e=3$ are (2) and (4).  If $0<e\leq2$, equation (1)
is a nonzero affine equation in the $e$ external labels.  There are
$O(n^{e-1})$ label assignments for each row comparison, and the total
contribution is
\[
O\!\left(n^3 n^{e-1} n^{-(4+e)/2}\right)=O(n^{e/2})=O(n).
\]
For $e=0$, there are $O(n^3)$ specifications, each with probability
$O(n^{-2})$, again contributing $O(n)$.

When $j=i+k$, only three row positions occur and there are $O(n^2)$ row
comparisons.  If $e>0$ of the three rows are paired externally, the
arithmetic-progression equation among their images leaves
$O(n^{e-1})$ external assignments.  Since the partial involution uses
$3+e$ vertices and $e\leq3$, the contribution is
\[
O\!\left(n^2n^{e-1}n^{-(3+e)/2}\right)=O(n^{(e-1)/2})=O(n).
\]
The case $e=0$ contributes
$O(n^2n^{-3/2})=O(n^{1/2})$.

Combining (2) and (4), with all other patterns absorbed into $O(n)$,
gives
\[
\mathbb{E}[W_{\mathrm{full}}]
=\frac19n^2+
\left(-\frac49+\frac9{20}\right)n^{3/2}+O(n)
=\frac19n^2+\frac1{180}n^{3/2}+O(n).
\]
\end{proof}

\subsection{Self-conjugate term and finite-range refinement}

\begin{proposition}
\label{prop:self-bound}
\[
\mathbb{E}[W_{\mathrm{self}}]=O(n).
\]
\end{proposition}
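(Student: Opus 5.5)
The plan is to bound $W_{\mathrm{self}}$ by counting, for each self-conjugate class, the expected number of colliding pairs of occurrences. I will reuse the partial-involution counting from Proposition~\ref{prop:bulk} together with the ratio estimate of Lemma~\ref{lem:involution-ratio}. The self-conjugate classes are those with $\delta=\pm k$. There are only $O(n)$ of them ($2(n-1)$ in total), compared with the $O(n^2)$ classes in the bulk count. A pair of occurrences is a comparison (1) with the extra constraint $p(i+k)-p(i)=p(j+k)-p(j)=\pm k$. I expect this extra linear constraint to cost a factor of order $n^{-1}$ relative to the bulk estimate, bringing $n^2$ down to $n$.

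\emph{Splitting by pattern.} I will sort the configurations exactly as in the proof of Proposition~\ref{prop:bulk}. Let $e$ be the number of the four (or three) row positions paired with external vertices, so the partial involution uses $4+e$ (or $3+e$) vertices. Each such configuration has probability $O(n^{-(4+e)/2})$ (resp.\ $O(n^{-(3+e)/2})$) by Lemma~\ref{lem:involution-ratio}.

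\emph{Counting solutions.} With the rows fixed, (1) together with the condition that the common difference equals $\pm k$ gives two independent affine equations in the images. There are then $O(n^{e-2})$ external label assignments when $e\ge2$ and $O(1)$ when $e\le 1$. Summing over the $O(n^3)$ row comparisons with $i<j$, $j\neq i+k$, gives
\[
O\!\left(n^3\,n^{\max(e-2,0)}\,n^{-(4+e)/2}\right).
\]
This is $O(n)$ for $e=4$ and $O(n^{1/2})$ for $e=3$, and smaller still for $e\le2$.

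\emph{Degenerate cases.} The fully internal case $e=0$ occurs when all images are among the rows, namely fixed points or internal transpositions. Here the row positions are already constrained by the $\pm k$ condition. For example, if both occurrences are fixed-point pairs, the rows are all fixed, and there are $O(n^3)$ choices with probability $O(n^{-2})$, giving $O(n)$. The overlapping comparisons $j=i+k$ contribute at most their bulk bound, which is already $O(n)$.

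The main obstacle is to verify that the two constraints are genuinely independent in every case with $e\ge 2$. This requires that both external images do not enter the two equations in a dependent way. In a few sub-cases, such as an external image appearing in only one of the two differences, only one equation involves external labels, while the other becomes a constraint on the rows. In those sub-cases I would instead count the lost freedom among the row positions, where fixing a row difference saves a factor of $n$, and check case by case that a factor of $n^{-1}$ is still gained. If this bookkeeping proves messy, the fallback is a cruder bound: each self-conjugate class $(k,\pm k)$ has occupancy $N_k$, so $W_{\mathrm{self}}\le\sum_k\binom{N_k}{2}$. Each $N_k$ has bounded second moment, since an occurrence $p(i+k)-p(i)=\pm k$ has probability $O(1/n)$ and there are $n-k$ candidate positions $i$. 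Hence $\mathbb{E}[N_k^2]=O(1)$, and summing over $k$ gives $O(n)$.
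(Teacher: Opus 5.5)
Your main line is the paper's own argument: split by the number $e$ of externally paired rows, count external labels with both differences pinned to $\sigma k$, and multiply by $I_{n-4-e}/I_n$. Your counts for $e=4$ and $e=3$ are correct, and so is your use of the bulk bound for the comparisons with $j=i+k$.

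\textbf{The $e=2$ case.} This is the step that fails. You claim two independent equations, giving $O(n^{e-2})=O(1)$ assignments and a contribution ``smaller still''. That is false when both external images lie in one occurrence, say at rows $i,i+k$. Then $j$ and $j+k$ are internal, so they are either two fixed points or the transposition $\{j,j+k\}$. Hence $p(j+k)-p(j)$ is identically $k$ or $-k$. For the matching sign the second equation holds automatically. There is no constraint on the rows, so your proposed repair (``fixing a row difference saves a factor of $n$'') has nothing to act on. The remaining occurrence leaves about $n-k$ choices of $(p(i),p(i+k))$, each of probability $I_{n-6}/I_n\sim n^{-3}$. This sub-case alone therefore contributes about $\sum_k (n-k)^3/(2n^3)\approx n/8$, which is genuinely of order $n$.

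This is harmless for the target, but the proof has to say so. One fix is to count per occurrence, as the paper does: $O(n)$ choices for an occurrence with two external images, and $O(1)$ for one with at most one. That gives $O(n)$ assignments and an $O(n)$ contribution at $e=2$. Alternatively, note, as you already do for $j=i+k$, that each pattern's self-conjugate contribution is at most its bulk contribution. The proof of Proposition~\ref{prop:bulk} already bounds that by $O(n)$ for all $e\le 2$. Only $e=4$ and $e=3$ with four distinct rows need the extra $\pm k$ constraint at all.

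\textbf{The fallback.} It does not supply the missing step either. From $\Pr[p(i+k)-p(i)=\pm k]=O(1/n)$ you get $\mathbb{E}[N_k]=O(1)$, not $\mathbb{E}[N_k^2]=O(1)$. The second moment needs the joint probability of two occurrences, and bounding that is exactly the case analysis above. Nor are occurrences close to independent. An occurrence at rows $(i,i+k)$ with values $(a,a-k)$ forces another occurrence at rows $(a-k,a)$; this is the correlation the appendix treats as a covariance correction. The bound $\mathbb{E}[N_k^2]=O(1)$ is true, but it has to be proved rather than read off from the first moment.
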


\begin{proof}
For four distinct row positions, retain the notation $e$ from the proof
of Proposition~\ref{prop:bulk}.  In a self-conjugate collision both
differences in (1) equal $\sigma k$, where $\sigma\in\{-1,1\}$.  For each
fixed row comparison and orbit pattern, the following table gives an
upper bound on the number of external-label assignments.  The contribution
row also includes the $O(n^3)$ choices of the row comparison and the factor
$O(n^{-(4+e)/2})$ from
Lemma~\ref{lem:involution-ratio}.
\[
\begin{array}{c|ccccc}
e&4&3&2&1&0\\ \hline
\text{external assignments}&O(n^2)&O(n)&O(n)&O(1)&O(1)\\
\text{contribution}&O(n)&O(n^{1/2})&O(n)&O(n^{1/2})&O(n)
\end{array}
\]
Indeed, a pair with two external images has $O(n)$ choices once their
difference is fixed; a pair with one external image determines it; and
a pair with no external image gives either a constraint on the row
positions or no compatible specification.

For the $O(n^2)$ comparisons with three row positions, three external
images form an arithmetic progression with prescribed difference and
therefore have only $O(n)$ choices.  With two, one, or no external images
there are $O(1)$ choices after the row positions are fixed.  Multiplying
by $O(n^{-(3+e)/2})$ shows that these four cases contribute respectively
$O(1)$, $O(n^{-1/2})$, $O(1)$, and $O(n^{1/2})$.  Summing the finitely
many orbit patterns and the two signs proves the proposition.
\end{proof}

\begin{theorem}[first-moment expansion]
\label{thm:first-moment}
\[
\lambda_{\mathrm{sym}}(n)
=\frac{n^2}{18}+\frac{n^{3/2}}{360}+O(n).
\]
\end{theorem}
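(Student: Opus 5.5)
The plan is to read the theorem off the definition of $\lambda_{\mathrm{sym}}$ from the mirror-pairing subsection. We first rewrite that definition algebraically and then substitute Propositions~\ref{prop:bulk} and~\ref{prop:self-bound}. By linearity of expectation,
\[
\lambda_{\mathrm{sym}}
=\mathbb{E}\!\left[\tfrac12\bigl(W_{\mathrm{full}}-W_{\mathrm{self}}\bigr)+W_{\mathrm{self}}\right]
=\tfrac12\,\mathbb{E}[W_{\mathrm{full}}]+\tfrac12\,\mathbb{E}[W_{\mathrm{self}}].
\]
This step needs both expectations to be finite. That is automatic, because $W_{\mathrm{full}}\le\binom n3$ deterministically: each collision is one of the $\binom n3$ comparisons of form (1).

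Next we insert Proposition~\ref{prop:bulk}. It gives $\tfrac12\mathbb{E}[W_{\mathrm{full}}]=n^2/18+n^{3/2}/360+O(n)$. We also insert Proposition~\ref{prop:self-bound}, which gives $\tfrac12\mathbb{E}[W_{\mathrm{self}}]=O(n)$. Adding the two produces the claimed expansion. The sum of two $O(n)$ terms is $O(n)$, and halving does not affect the implied constants in any way that matters.

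We should also check that the normalization of classes is consistent between the definition and the two propositions. $W_{\mathrm{full}}$ counts each collision once, in the normalized class with positive row stride. This is exactly what Proposition~\ref{prop:bulk} counts: ordered comparisons $i<j$ with common stride $k$. The self-conjugate part $W_{\mathrm{self}}$ is the sub-sum over $\delta=\pm k$, which Proposition~\ref{prop:self-bound} treats with the sign $\sigma$. So the same quantities appear on both sides, and no factor of $2$ is lost.

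Finally, the introduction writes the leading term as $(n-1)(n-2)/18$, while the theorem writes it as $n^2/18$. The two differ by $O(n)$, so the forms are interchangeable.

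The only real content lies in the two propositions, which we take as given. The main point needing care is therefore the bookkeeping in the first display. If one instead grouped paired classes before taking expectations, one would have to verify the identity $W_{(k,\delta)}=W_{(\delta,k)}$ pathwise. Proposition~\ref{prop:complete} supplies this, via the bijection between occurrence sets given by transposition. With it, the halving is exact rather than merely approximate in expectation.
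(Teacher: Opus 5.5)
Your proposal is correct and matches the paper's proof: rewrite the mirror-pair definition by linearity as $\lambda_{\mathrm{sym}}=\tfrac12\mathbb{E}[W_{\mathrm{full}}]+\tfrac12\mathbb{E}[W_{\mathrm{self}}]$, then substitute Propositions~\ref{prop:bulk} and~\ref{prop:self-bound}. Your extra bookkeeping (finiteness, class normalization, and the pathwise pairing from Proposition~\ref{prop:complete}) is accurate, but the theorem does not need it, because with $\lambda_{\mathrm{sym}}$ defined as stated the identity is pure algebra.
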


\begin{proof}
The mirror-pair identity gives
\[
\lambda_{\mathrm{sym}}
=\tfrac12\mathbb{E}[W_{\mathrm{full}}]
+\tfrac12\mathbb{E}[W_{\mathrm{self}}].
\]
Apply Propositions~\ref{prop:bulk} and~\ref{prop:self-bound}.
\end{proof}

The theorem determines the first two terms but not the linear
coefficient.  A heuristic occupancy calculation for the self-conjugate
classes gives a linear contribution of $41n/30$; a separate fit gives
$1.362968n$.  Appendix~\ref{app:model-computation} gives the calculation,
data, and fit specification.  Proposition~\ref{prop:bulk} leaves an
additional $O(n)$ contribution from the non-self-conjugate part, so
$41/30$ is not imposed as the linear coefficient of the full exponent.
For numerical work over the measured range, we fit
\[
\lambda_{\mathrm{model}}(n)
=\frac{(n-1)(n-2)}{18}+\frac{n^{3/2}}{360}
+b_1n+b_2\sqrt n+b_3
\tag{5}
\]
to the same exact and Monte Carlo orders.  This gives
\[
b_1=1.300606,\qquad b_2=-2.467695,\qquad b_3=1.196849.
\]
The maximum absolute residual at the fitted orders is $0.044$.  Equation
(5) is a finite-range calibration of the theorem's $O(n)$ remainder, not
an asserted asymptotic expansion.
To evaluate this calibration, we computed $\lambda_{\mathrm{sym}}$ exactly
for orders up to 18 by enumerating all involutions.  At order 18, their
number is $I(18)=997{,}313{,}824$.  For orders 19 through 60, we used Monte
Carlo with $K$ up to $2\times 10^{6}$ and standard errors below $0.08$.
Table~\ref{tab:lambda} compares the assembled model with the measurements.

\begin{table}[ht]
\centering
\begin{tabular}{rrrr}
\toprule
$n$ & $\lambda_{\mathrm{sym}}$ (exact/MC) & model & residual \\
\midrule
12 & 14.477 & 14.482 & $-0.005$ \\
14 & 18.984 & 18.984 & $0.000$ \\
16 & 23.984 & 23.980 & $+0.003$ \\
18 & 29.467 & 29.461 & $+0.005$ \\
24 & 48.762 & 48.760 & $+0.002$ \\
32 & 81.051 & 81.026 & $+0.024$ \\
40 & 120.672 & 120.650 & $+0.022$ \\
48 & 167.567 & 167.564 & $+0.003$ \\
\bottomrule
\end{tabular}
\caption{The first-moment exponent against the finite-range model (5).
The derived terms are $(n-1)(n-2)/18+n^{3/2}/360$; the fitted
coefficients are $b_1=1.300606$, $b_2=-2.467695$, and $b_3=1.196849$.
Exact values for $n\leq18$; Monte Carlo beyond
($K=2\times10^{6}$ at $n\geq40$).}
\label{tab:lambda}
\end{table}

For the non-self-conjugate part, subtracting the derived $n^2$ and
$n^{3/2}$ terms leaves residuals of $-0.1553n$, $-0.1550n$, and
$-0.1546n$ at orders 40, 48, and 60.  This is consistent with the $O(n)$
remainder in Theorem~\ref{thm:first-moment}.

\subsection{Clumping and extinction boundary}

The Poisson heuristic predicts a sporadic population
$I(n)\,e^{-\lambda_{\mathrm{sym}}(n)}$.  Let
$C_{\mathrm{sym}}(n)$ denote the total number of main-diagonal symmetric
Costas arrays of order $n$.  The published sporadic-only series contains
only 1--15 equivalence classes per order at orders 11--23, which is too
sparse for a stable three-parameter fit
\cite{RickardDrakakis2009Symmetry}.  We therefore calibrate the clumping
correction on the larger total census counts at orders 10--23.  Write
\[
r_{\mathrm{fit}}(n)
=0.035827520\,n^2-0.124784466\,n+2.612907856.
\]
Using the higher-sample estimates, the three-term fit gives
\[
\ln\frac{C_{\mathrm{sym}}(n)}{I(n)\,e^{-\lambda_{\mathrm{sym}}(n)}}
\;\approx\; r_{\mathrm{fit}}(n),
\qquad n = 10\text{--}23 .
\]
Its quadratic coefficient, reported as $0.036$, is an empirical finite-range
calibration, not a constant derived from a collision calculation.
Appendix~\ref{app:model-computation} gives the sensitivity fits.
Collisions clump: configurations that produce one collision tend to
produce several, so the number of \emph{distinct} bad configurations is
smaller than the collision count.  Using the published general Costas census
counts for orders 10--25
\cite{DrakakisIorioRickard2011Order28,DrakakisIorioRickardWalsh2011Order29},
the same procedure gives a fitted quadratic coefficient of $0.017$.
The larger symmetric coefficient is qualitatively consistent with the
additional mirror correlation, but the value $0.036$ has no derivation.

Let
\[
E(n)=I(n)\exp\!\left[-\lambda_{\mathrm{model}}(n)
+r_{\mathrm{fit}}(n)\right]
\]
be the model's clumping-corrected estimate of the number of sporadic oriented
arrays.  Both the raw census fit and the corrected model place the crossing
$E(n)=1$ between orders 27 and 30.  No sporadic main-diagonal symmetric array
occurs at orders 24--42.
The difference between the
fitted crossing and the last observed sporadic order is within the
uncertainty of this heuristic calibration.  Beyond order 34 the
model estimate is below $10^{-1}$ and falls by roughly a factor 2
per order.  It is near $10^{-2}$ at order 38 and near $10^{-3}$ at orders
41 and 42.  The model therefore predicts no sporadic array at orders
37--42.  Section~\ref{sec:census} compares that prediction with the
exhaustive censuses.

\section{Exhaustive enumeration and verification}
\label{sec:method}

The census claims require complete enumeration.  The algorithm must generate
every involution exactly once and may prune a branch only when that branch
cannot contain a Costas array.  This section establishes these properties.
Appendix~\ref{app:repro} provides the validation protocols, completion
records, and checksums.

\textbf{Search algorithm.}  The solver constructs every involution exactly once,
orbit by orbit, and abandons a branch the moment it repeats a displacement
vector.  One recursion step proceeds as follows:

\begin{enumerate}
\item Let $a$ be the smallest row not yet assigned.
\item Branch on the partner of $a$: either $p_a=a$ (a fixed point), or
$p_a=b$ and $p_b=a$ for an unassigned $b>a$ (a transposition).  The
smallest-unassigned-row rule gives each involution a unique construction
path, so the solver enumerates no involution twice.  Every completed
assignment is an involution by construction, so no separate
diagonal-symmetry test is required.
\item Form every displacement vector between the new dot or dots and all
dots already placed, including the vector inside the new pair.  Reject
the choice if any of these vectors is already in use.  The constraints of
Section~\ref{sec:lemmas} are enforced with separate bitmasks for used centers
and used gaps.  A 128-bit mask for each row stride records the signed column
differences already used.  These masks allow the solver to check, apply, and
undo a proposed orbit with bitwise operations.
\item If the choice survives and unassigned rows remain, recurse.  When
no rows remain, the assignment is an involution with all displacement
vectors distinct, so it is a main-diagonal symmetric Costas array and is
written out.
\item Two sound early discards prune the tree.  First, near a leaf, the
solver checks a fixed number of the next unassigned rows and drops the
branch if any checked row has no feasible partner.  The number of rows
checked and the activation threshold affect the search cost but not the
soundness of the test.  Appendix~\ref{app:repro} records the production
settings.  Second, the solver drops a partial assignment once it proves
that the completed permutation would be the lexicographically larger member
of $\{p,\,RC(p)\}$.  The search retains one member of each reverse-complement
pair and regenerates the other at output.
\end{enumerate}

Keeping the differences signed matters.  Identifying $(k,\delta)$ with
$(k,-\delta)$ would reject valid arrays.  For $n>32$ the signed range
spans more than 64 values.  The production solver uses 128-bit masks and
is valid to $n=63$.

\textbf{Deterministic sharding.}  A shard records the sequence of partner
choices at a fixed orbit depth.  The CPU campaigns use depth three.  The GPU
campaigns at orders 37--42 use depth five.  Shard generation applies the same
acceptance predicates as the search.  The resulting subtrees are disjoint and
cover the canonical tree.  Appendix~\ref{app:repro} states and proves this
partition theorem.  Shards are executed independently.  A shard writes a
result only after successful completion.  An interrupted shard is rerun.
The source, binary, shard list, and result files are checksummed.

\textbf{Validation.}  The 64-bit and 128-bit builds agree on every
instrumented counter at orders 12--20, not only on solution counts.  Shard
unions reproduce the monolithic runs.  A separate CUDA kernel over depth-5
partitions reproduces the complete CPU census sets at orders 37--40.  Every
reported array is re-verified independently for the permutation, involution,
and Costas properties.  Each production configuration reproduces a reference
run before use.

\section{Exhaustive censuses at orders 37--42}
\label{sec:census}

\begin{theorem}
\label{thm:census}
The main-diagonal symmetric Costas arrays of orders 37 through 42 are
exactly the 38 arrays listed in Appendix~\ref{app:arrays}.  Their counts
at successive orders are 4, 0, 16, 2, 12, and 4.
\end{theorem}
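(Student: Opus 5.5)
The theorem is a computational census. Its proof reduces to two things: the correctness of the exhaustive search of Section~\ref{sec:method}, and a verified execution record for each order. I would argue in three layers.

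\emph{Completeness and soundness of the search tree.} Every involution on $\{0,\ldots,n-1\}$ has exactly one construction path. The smallest unassigned row $a$ is determined by the partial assignment, and its partner ($a$ itself or some unassigned $b>a$) is read off from $p$. Distinct involutions therefore yield distinct leaves, and every involution is reached unless some prefix is pruned.

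Next I check that each pruning rule discards only branches containing no Costas array.
\begin{enumerate}
\item Rejection on a repeated displacement vector is sound. Every vector between placed dots persists in every completion, and the Costas condition forbids the repetition.
\item The center, gap and ruler bitmasks only re-express such repetitions. By Lemmas~\ref{lem:antidiag} and~\ref{lem:diag} and Proposition~\ref{prop:complete}, each of these conditions is a necessary consequence of distinct displacements, so using them cannot remove a valid array.
\item The look-ahead test is sound because a row with no feasible partner cannot be completed.
\item The reverse-complement discard is sound because $RC$ preserves involutions and the Costas property. Hence for each surviving array the lexicographically smaller of $\{p,RC(p)\}$ survives, and the output stage regenerates the other member.
\end{enumerate}
For orders 41 and 42, I additionally invoke Corollary~\ref{prop:ruler} to justify restricting to the listed fixed-point classes $f$.

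\emph{Sharding.} I prove that depth-$d$ shards, generated with the same acceptance predicates, partition the canonical tree. Each surviving node of depth at least $d$ has a unique depth-$d$ ancestor, and a subtree below a rejected depth-$d$ prefix contains no solutions by the soundness above. It follows that the union of the shard outputs equals the monolithic output. The execution records of Appendix~\ref{app:repro} then certify that every shard completed, with checksummed binaries and shard lists.

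\emph{Counts and identification.} Each array listed in Appendix~\ref{app:arrays} is checked independently to be a permutation, an involution, and Costas. This gives the inclusion of the listed set in the census; the search gives the converse. The counts $4,0,16,2,12,4$ are then read off. Identifying the arrays with the Lempel, corner-edit and Rickard--Golomb families is a separate finite check and is not needed for the theorem as stated.

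The main obstacle is that the argument certifies the algorithm, not the execution. I would therefore rely on the cross-checks. The 64-bit and 128-bit builds agree on all counters at orders 12--20. The CPU and CUDA runs agree at orders 37--40, and the solver reproduces the published census through order 36. The 128-bit signed-difference masks must be confirmed to cover the range $|\delta|\le n-1$ for $n\le 42$. For orders 41--42, where only the GPU run exists, the evidence reduces to the shard completion records.
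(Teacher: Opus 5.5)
Your proposal is correct and follows essentially the same route as the paper's proof. That proof combines soundness and completeness of the least-unassigned-row orbit search, its pruning rules and the shard partition (Appendix~\ref{app:repro}), complete shard coverage in the CPU and CUDA execution records (agreeing at orders 37--40, CUDA alone at 41--42), and independent re-verification of every reported array.

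Two minor points. The appeal to Corollary~\ref{prop:ruler} is unnecessary, since the signed difference masks already enforce the ruler condition. Also, the paper records validation runs only at selected orders (12--20, 30 and 34), not a reproduction of the whole published census through order 36.
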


\begin{proof}
Section~\ref{sec:method} gives the enumeration algorithm, and
Appendix~\ref{app:repro} proves that its branches cover every involution
without duplication and that each pruning rule is sound.  The CPU records
establish complete depth-3 shard coverage at orders 37--40.  A separate
CUDA kernel, run over depth-5 partitions, also completed those four orders
and returned the identical canonical representatives and
reverse-complement mates.  The CUDA records establish complete depth-5
coverage at orders 41 and 42.  Every reported permutation was checked
independently for the permutation, involution, and Costas properties.
\end{proof}

\begin{table}[ht]
\centering
\begin{tabular}{rrrl}
\toprule
order & arrays & $RC$ classes & classification \\
\midrule
37 & 4 & 2 & corner deletions of Lempel arrays over $\mathbb{F}_{41}$ \\
38 & 0 & 0 & empty \\
39 & 16 & 8 & Lempel arrays over $\mathbb{F}_{41}$ \\
40 & 2 & 1 & corner augmentations of Lempel arrays over $\mathbb{F}_{41}$ \\
41 & 12 & 6 & Lempel arrays over $\mathbb{F}_{43}$ \\
42 & 4 & 2 & two corner augmentations and the Rickard--Golomb pair \\
\bottomrule
\end{tabular}
\caption{Complete main-diagonal symmetric censuses at orders 37--42.
The 38 oriented arrays form 19 reverse-complement ($RC$) classes.}
\label{tab:census}
\end{table}

The sixteen order-39 arrays are the
sixteen Lempel arrays over $\mathbb{F}_{41}$.  For a primitive root $g$,
the corresponding Lempel array places a dot at $(i,j)$ when
$g^{i}+g^{j}=1$.  The four order-37 arrays are obtained by deleting a
two-row and two-column diagonal corner block from members of that family.
The two order-40 arrays are the diagonal-corner augmentations of one
reverse-complement class.  All 22 arrays match entries in the recorded
construction database \cite{BeardDataPort2017}; the census contains no
unclassified array.

No field construction reaches a main-diagonal
symmetric array of order 38, and the model of Section~\ref{sec:model} puts the
expected sporadic count there near $10^{-2}$.  The exhaustive census is
empty, consistent with both the construction record and the model.

The twelve order-41 Lempel arrays all have one fixed point.  A Lempel dot
at $(i,i)$ requires $2g^{i}\equiv 1 \pmod{43}$, or equivalently
$g^i\equiv22\pmod{43}$.  Because $g$ generates the nonzero elements of
$\mathbb{F}_{43}$, exactly one exponent $i$ modulo 42 satisfies this
equation.  Thus every Lempel array has exactly one diagonal dot,
equivalently one fixed point.  The twelve arrays form six
reverse-complement classes.  Exactly one class can be extended to order
42 by adding a diagonal corner dot; the two choices of corner produce the
two order-42 augmentations recorded in Appendix~\ref{app:arrays}.

Combining Table~\ref{tab:census} with the main-diagonal column of
\cite{RickardDrakakis2009Symmetry}, sporadic main-diagonal symmetric
Costas arrays occur at orders 11--23 and nowhere else through order 42.
The earlier census asked more broadly whether symmetric Costas arrays of
unknown origin exist above order 23.  The combined record answers that
question for main-diagonal symmetry through order 42.  The absence of a
sporadic array in the six new censuses is consistent with the model's
qualitative prediction.

\subsection{Order-42 census and the Rickard--Golomb family}
\label{sec:sporadic}

The two order-42 arrays that are not corner augmentations form the
reverse-complement pair
\begin{quote}\begin{footnotesize}\begin{verbatim}
19 1 41 25 30 37 40 32 36 15 10 20 39 24 38 9 29 18 17 0 11 26 34 27 13 3 21 23 35
   16 4 33 7 31 22 28 8 5 14 12 6 2
39 35 29 27 36 33 13 19 10 34 8 37 25 6 18 20 38 28 14 7 15 30 41 24 23 12 32 3 17
   2 21 31 26 5 9 1 4 11 16 0 40 22
\end{verbatim}\end{footnotesize}\end{quote}
Rows and columns are numbered from 0 to 41.  Each permutation is split
across two display lines; concatenating the two lines gives its 42
entries.  Each array has 20
transpositions and two fixed points --- at indices 1 and 10 in the first
array, at their reflections 31 and 40 in the second --- and no dot in
any corner cell.

To determine whether this pair has a known algebraic origin, we test the
symmetric Rickard--Golomb family $RG_{1}$
\cite{Rickard2004Periodicity,Drakakis2011OpenProblems}.  For a prime power
$q$, choose a primitive element $g$ of $\mathbb{F}_{q}$ and a nonzero
constant $c$.  Place a dot at
$(i,j)\in\mathbb{Z}_{q-1}\times\mathbb{Z}_{q-1}$ when
$c\,(g^{i}+g^{j})=1$.  The relation leaves exactly one row and one column
empty, both indexed by the exponent $i_{0}$ for which
$g^{i_{0}}=c^{-1}$.  Adding the dot $(i_{0},i_{0})$ produces a symmetric
permutation candidate of order $q-1$; the candidate need not have the
Costas property.  The choice $c=1$ gives the corner-augmented Lempel
candidates.  Proposition~\ref{prop:family} tests the complete family at
$q=43$.

\begin{proposition}
\label{prop:family}
Of the 504 arrays of this family at $q=43$, exactly four have the Costas
property, and they are the four census arrays: the corner augmentations
at $(g,c)=(19,1)$ and $(34,34)$, and the pair above at $(30,33)$ and
$(33,14)$.
\end{proposition}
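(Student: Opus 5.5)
The plan combines a short counting argument, a finite computation over the whole family, and one appeal to Theorem~\ref{thm:census}.

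\emph{Counting the family.} First I would confirm that the family at $q=43$ has exactly 504 members. $\mathbb{F}_{43}$ has $\varphi(42)=12$ primitive roots $g$ and $42$ nonzero constants $c$, so there are $12\cdot 42=504$ parameter pairs $(g,c)$. Each pair yields one candidate permutation of order $42$, indexed by $\mathbb{Z}_{42}$ with representatives $0,\dots,41$. To see that each candidate is a permutation, fix a row $i\neq i_0$. The equation $g^j=c^{-1}-g^i$ has a unique solution $j$, because the right-hand side is nonzero exactly when $g^i\neq c^{-1}$. The row $i_0$ receives the added dot $(i_0,i_0)$. The relation is symmetric in $i$ and $j$, so each candidate is an involution. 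I would treat the 504 candidates as oriented arrays without identifying coincident ones, as the statement does, but I would record any coincidences found during the computation.

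\emph{Testing the Costas property.} For each $(g,c)$ I would build the permutation $p$ and compute all $\binom{42}{2}$ displacement vectors $(k,p_{i+k}-p_i)$, checking that they are pairwise distinct. This is a finite computation of about $504\cdot 861$ vector insertions. It should be done by a program independent of the census solver, and ideally also by a second implementation, in keeping with the validation philosophy of Section~\ref{sec:method}. The output should be exactly four parameter pairs, namely $(19,1)$, $(34,34)$, $(30,33)$ and $(33,14)$. For each I would print the resulting permutation.

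\emph{Matching with the census.} I would then compare the four permutations with the order-42 arrays of Theorem~\ref{thm:census}. The census lists exactly four arrays at order 42, so it suffices to show that the four family members are distinct and each equals some census array.
\begin{itemize}
\item For $(30,33)$ and $(33,14)$, I would check entry by entry against the two displayed permutations. As consistency checks, the fixed points should be $\{1,10\}$ for the first and $\{31,40\}$ for the second, there should be no corner dot, and one should be the reverse complement $RC$ of the other.
\item For $(19,1)$ and $(34,34)$, I would check that the results are the two corner augmentations listed in Appendix~\ref{app:arrays}. For $c=1$ the added dot is at $i_0=0$, a corner, consistent with the Lempel augmentation. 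For $(34,34)$ we have $g^{i_0}=34^{-1}$, which should place the added dot at the opposite corner $i_0=41$.
\end{itemize}
Once the four are shown to be distinct and in the census, the statement follows: the family contributes exactly the census arrays and no others.

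\emph{Main obstacle.} The hard part is not the mathematics, which is a finite check, but making the identification robust to indexing conventions. Exponents live in $\mathbb{Z}_{42}$, and the map to rows $0,\dots,41$, along with any reflection, determines which printed permutation one obtains. A mismatch could wrongly suggest that a census array lies outside the family. The first thing I would try is to canonicalize both sides, for example by comparing the unordered pair $\{p,RC(p)\}$, before matching, and to confirm that $RC$ maps the family to itself through a reparametrization $(g,c)\mapsto(g^{-1},c')$. That map would explain why the two census pairs appear with related parameters: $34=19^{-1}$ in $\mathbb{F}_{43}$ would need checking, and likewise whether $33=30^{-1}$.
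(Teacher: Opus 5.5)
Your proposal is correct and is essentially the paper's own proof: an exhaustive computer check over all $12\cdot 42=504$ pairs $(g,c)$ that builds each permutation, tests its difference triangle for repeated values, and finds that exactly the four stated pairs pass. Your explicit matching against the census arrays is a sound refinement that the paper leaves implicit, and your guessed reparametrization holds: $RC$ sends the member with parameters $(g,c)$ to the member with parameters $(g^{-1},cg^{-1})$, and indeed $19\cdot 34\equiv 30\cdot 33\equiv 1$ and $33^2\equiv 14 \pmod{43}$, which accounts for the pairings $(19,1)\leftrightarrow(34,34)$ and $(30,33)\leftrightarrow(33,14)$.
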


\begin{proof}
The verification enumerates all twelve primitive roots and all 42 nonzero
values of $c$.  For each of the resulting 504 parameter pairs, it
constructs the permutation and checks every row of its difference
triangle for a repeated value.  Exactly the four stated parameter pairs
pass.  The source and complete output are included with the computational
artifacts.
\end{proof}

The pair is a different $RG_{1}$ equivalence class from the order-42 array
reported by Rickard in 2004 and recorded in the survey
\cite{Rickard2004Periodicity,Drakakis2011OpenProblems}.  Unlike the other
two order-42 arrays,
neither member is a direct corner augmentation: neither has a corner dot,
and a finite check finds no one-dot corner extension with the Costas
property.  Thus, within the main-diagonal class, the census extends through
order 42 the negative record underlying the closing question of
\cite{RickardDrakakis2009Symmetry}.

The pair was found by the GPU enumeration and verified twice more:
the CPU solver of Section~\ref{sec:method}, restricted to the subtree
with orbit prefix $(19,1,41,25,30)$, reports exactly these two arrays,
and both satisfy the permutation, involution and distinct-vector
conditions by direct check.

\section{Discussion}
\label{sec:discussion}

Exhaustive results now determine the main-diagonal symmetric class
through order 42.  The census contains no sporadic array at any of the
nineteen consecutive orders from 24 through 42.  The collision model is
consistent with this record.  Its $n^2/18$ and $n^{3/2}/360$ terms are
derived.  Its linear-scale remainder and clumping correction are calibrated.

At orders 24 through 42, existence in the main-diagonal symmetric class
coincides with the reach of the known constructions: the Lempel family,
its corner edits, and the Rickard--Golomb family.  The full Costas census
shows a similar numerical decline, from 21{,}104 arrays at order 16 to 56
at order 26 \cite{DrakakisIorioRickardWalsh2011Order29}, but the symmetric
model does not by itself establish the corresponding law for general
permutations.

The $O(n)$ term in Theorem~\ref{thm:first-moment} remains open.
The covariance contribution to the self-conjugate linear coefficient is
presently heuristic, and the fitted clumping coefficient $0.036$ has no
derivation.  Deriving these two coefficients would sharpen the modeled
crossing near orders 27--30 but would not affect the exact censuses.

The enumeration does not use the linear-forest representation of
Proposition~\ref{prop:forest}.  Appendix~\ref{app:negative} tests several
search strategies and topology statistics derived from this representation.
None improved the solver at the tested orders or separated the census arrays
from matched random involutions.  Stronger graph-theoretic constraints may
nevertheless lead to better search methods.

\appendix

\section{Evaluation of alternative search strategies}
\label{app:negative}

We evaluated whether the linear-forest representation of
Proposition~\ref{prop:forest} could reduce the enumeration cost.  We
implemented three natural search strategies and compared each with the
solver of Section~\ref{sec:method} under matched conditions.  None
reduced the search cost at the tested orders.  The companion repository
contains the source code and compact measurement tables.

\begin{enumerate}
\item \textbf{Partition by fixed-point count.}  The admissible fixed-point
classes can be searched independently, so the split appears inexpensive.
Measured at order 20 it needs 23 times the nodes of the single tree: the
classes cannot share partial results, and the same transposition-heavy
prefixes are re-explored in every class.
\item \textbf{Search by edge length.}  Each gap occurs at most once
(Lemma~\ref{lem:antidiag}), so one can branch on ``does a transposition
of gap $g$ exist, and where?'', longest first.  At order 20 this visits
282 times as many states as the row-by-row search; the ratio also increases
across the tested orders 16, 18, and 20.  Long gaps place isolated dots whose mutual
constraints are weak, so early decisions run nearly unconstrained, while
row-by-row placement packs dots densely and every new orbit is tested
against everything already placed.
\item \textbf{Feasibility checks on the remaining rows.}  We tested whether
enough unused centers and gaps remained for the unassigned rows.  The test
rejected only three of $5.7\times 10^{7}$ states in an order-38 subtree.
Near the leaves, the unused centers and gaps greatly outnumber the unassigned
rows.  Exact completion can be formulated as a rainbow-matching problem, but
this counting test provides almost no pruning.
\end{enumerate}

To test whether Costas involutions have unusual forest shapes, we compared
the 486 census arrays of orders 7--26 and the arrays of
Section~\ref{sec:census} with random involutions.  Each control group
contained $2{,}000$ involutions with the same order and fixed-point count
as the corresponding census group.  We measured the number of chained
orbit pairs, the longest path, and the number of components.  No matched
group differed from its control by more than $2.4$ estimated standard
errors, so none of the three statistics separates the two populations at
an individual order and fixed-point count.

Pooling the chained-orbit counts gives $Z=+3.6$, but this apparent
difference is confined to orders at most 18 ($Z=+4.6$) and is absent at
later orders ($Z=-1.7$).  The reverse-complement mates and the pooled
groups are not independent, so these values do not establish statistical
significance.  The longest path in any census array has five edges.

This study yielded no sound pruning rule based only on topology.  The
available spectrum-level exclusion is the edge-count bound implied by
Corollary~\ref{prop:ruler}: the topology determines
$e=(n+f)/2$, and the Golomb-ruler condition bounds $f$.  The remaining
constraints used by the solver depend on the vertex labels and are not
captured by the tested unlabeled forest statistics.

\section{Reproducibility}
\label{app:repro}

\subsection{Correctness of the enumeration}

The solver maintains five invariants.  Together they prove
Theorem~\ref{thm:census}, provided that the implementation executes the
stated rules correctly.  Sections~\ref{app:validation}
and~\ref{app:artifacts} give the corresponding execution checks.

\begin{enumerate}
\item \textbf{Partial involution.} Every assigned index belongs to exactly
one fixed point or transposition; both orbit indices must be unassigned
when the orbit is placed.
\item \textbf{Exact difference state.} For every stride, the difference
mask contains exactly the signed column differences among currently
assigned dots; apply and rollback operate on collision-free grouped masks.
\item \textbf{No repeated vector.} Every new vector is tested against the
permanent masks and against the same-orbit pending mask before
commitment.
\item \textbf{Sound pruning.} The center mask, the gap mask, the
empty-domain forward check, and reverse-complement canonicalization remove
only branches that repeat a vector, have a row with no candidate, or are
the lexicographically larger member of a rotation pair.  The
reverse-complement rule, precisely: scan $i=0,1,\ldots$; if $p_{i}$ or
$p_{n-1-i}$ is unassigned, the comparison is undecided and the branch is
kept; otherwise the first index with $p_{i}\neq n-1-p_{n-1-i}$ decides,
and the branch is pruned exactly when $p_{i}> n-1-p_{n-1-i}$ there.  A
subtree decided smaller is not rechecked; a full assignment with all
comparisons equal is its own reverse complement and is reported once.
\item \textbf{Exhaustive partition.} Least-unassigned-row orbit branching
gives every involution exactly one construction path.
\end{enumerate}

\begin{proposition}[shard partition]
\label{prop:shards}
For any fixed orbit depth, the shard subtrees are pairwise disjoint and
their union is the complete canonical search tree.
\end{proposition}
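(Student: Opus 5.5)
We prove the shard partition by identifying shards with nodes of the canonical search tree. The canonical tree $\mathcal T$ is rooted at the empty assignment. The children of a node are the orbit choices for the least unassigned row $a$ (either $p_a=a$ or $p_a=b$ for an unassigned $b>a$) that pass every acceptance predicate listed under invariant~4: no repeated vector, the center and gap masks, the empty-domain forward check, and reverse-complement canonicalization. By invariant~5, each node at orbit depth $d$ is determined by a unique sequence of $d$ partner choices. Each leaf, being a completed involution, likewise has a unique root-to-leaf path. A shard at depth $D$ is by definition such a sequence, and its subtree is the set of descendants in $\mathcal T$ of the corresponding node.

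\textbf{Step 1 (shards are exactly the depth-$D$ nodes).} We show that the shard generator outputs precisely the nodes of $\mathcal T$ at depth $D$. Because shard generation applies the same acceptance predicates as the search, induction on depth gives the claim: a prefix survives generation to depth $d$ if and only if it is a node of $\mathcal T$ at depth $d$. Completed leaves at depth less than $D$ need care, since a full involution can have fewer than $D$ orbits only when $n$ is small. Such leaves are either emitted as their own shards or excluded by the hypothesis $n\ge 2D$; we state which convention the implementation uses. The one subtle predicate is the reverse-complement check, which is evaluated on a partial assignment. We must confirm that the rule in invariant~4 depends only on the currently assigned entries. It does: the scan stops at the first undecided index. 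Hence the prune-or-keep decision for a prefix is the same whether it is made during generation or during search.

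\textbf{Step 2 (disjointness).} Take two distinct depth-$D$ shards. They differ at some first orbit index $d\le D$. Their paths agree before that index, so both branch from the same node and hence from the same least unassigned row $a$, but they assign $a$ different partners. Any node below both would have two different values of $p_a$, which is impossible. So the subtrees are disjoint.

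\textbf{Step 3 (coverage).} Let $v$ be any node of $\mathcal T$. If its depth is at least $D$, its ancestor at depth $D$ is a shard root by Step~1, so $v$ lies in that shard. If its depth is less than $D$, $v$ is an internal prefix of some shard path and is explored as part of that shard's prefix, or it is a short leaf covered by the convention fixed in Step~1. Every node of $\mathcal T$ is therefore covered, and in particular every canonical output leaf is reported by exactly one shard.

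\textbf{Main obstacle.} The hard part is Step~1: the generator and the search must prune identically. This is the only place where a mismatch could lose a subtree, for example if the forward check were activated only near leaves in the search but applied differently during generation. We handle it by observing that every predicate is a deterministic function of the partial assignment and $n$. The activation threshold of the forward check is also a function of the depth, so the node sets coincide. If the generator instead applies only a subset of the predicates, it produces a superset of the true shard roots, whose extra shards are pruned empty at their root by the search; disjointness and coverage still hold.
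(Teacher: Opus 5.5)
Your proposal is correct and follows the paper's proof. Identical acceptance predicates make the generated prefixes exactly the depth-$D$ nodes of the canonical tree, invariant~5 gives each leaf a unique prefix, distinct prefixes diverge at the same least unassigned row, and your determinism-in-the-partial-assignment argument plays the role of the paper's remark that prefix replay reconstructs the identical frontier state. Two minor overstatements do not affect the conclusion about leaves: a dead-end prefix of depth less than $D$ belongs to no shard, and the forward check is triggered by the number of remaining rows rather than by the orbit depth.
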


\begin{proof}
Shard generation runs the identical acceptance predicates as search and
emits the partner sequence at the chosen orbit depth.  By invariant~5 every
surviving leaf has exactly one prefix of that length; distinct prefixes diverge
at an orbit choice and cannot reach the same involution; prefix replay
revalidates every orbit and reconstructs the identical frontier state.
\end{proof}

\begin{proposition}[soundness and completeness]
Every reported permutation is a main-diagonal symmetric Costas array, and
every such array of the requested order is reported exactly once.
\end{proposition}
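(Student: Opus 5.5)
The plan is to derive the proposition directly from the five invariants of the preceding list, together with Proposition~\ref{prop:shards}.  The argument splits into soundness, completeness and uniqueness.  The guiding observation is that the search tree with no pruning at all is exactly the tree of all involutions: each involution is built along one root-to-leaf path (invariant~5).  Every pruning rule then removes either whole subtrees that contain no Costas array, or the lexicographically larger member of a reverse-complement pair, which output regenerates.

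\emph{Soundness.}  By invariant~1, every leaf is a complete partial involution, hence a symmetric permutation matrix.  Invariants~2 and~3 give, by induction on the number of placed orbits, that at every node the per-stride masks record exactly the signed differences among placed dots.  The induction also shows that no two placed pairs with the same stride share a signed difference.  The check covers both the vectors from the new dot or dots to old dots and the internal vector of a new transposition, so every pair of dots is tested exactly once, when the later of its two orbits is placed.  At a leaf, all $\binom n2$ vectors are therefore distinct, and the array is Costas.  The regenerated mate $RC(p)$ is also a symmetric Costas array, because $180^\circ$ rotation preserves the displacement multiset up to negation and commutes with transposition.

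\emph{Completeness and uniqueness.}  Fix a main-diagonal symmetric Costas array $p$ and let $q$ be the lexicographically smaller of $p$ and $RC(p)$.  The prefixes of $q$ along its unique construction path survive every test, which I will check rule by rule.
\begin{itemize}
\item The vector test and the center and gap masks reject only repeated vectors.  By Lemma~\ref{lem:antidiag} a repeated center or gap is itself a repeated vector, and $q$ has none.
\item The forward check drops a node only when some unassigned row has no feasible partner, which is impossible below a node that extends to $q$.
\item The $RC$ rule prunes only when the first decided index shows the completion larger than its reverse complement.  For prefixes of $q$ every decided comparison agrees with $q$ versus $RC(q)$, so this never happens.
\end{itemize}
Hence $q$ is reported.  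It is reported once because its construction path is unique (invariant~5), and shards neither lose nor duplicate leaves (Proposition~\ref{prop:shards}).  If $q\neq RC(q)$, the other member is produced once at output.  The larger member cannot also appear as a leaf, since the $RC$ rule prunes it once its decisive index is assigned.  If $q=RC(q)$, the array is reported once without a mate.

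The step I expect to be delicate is the reverse-complement canonicalization.  The rule compares $p_i$ with $n-1-p_{n-1-i}$ while only part of the permutation is assigned, and it must never prune a prefix of the smaller member.  I would argue that the scan decides exactly at the first index where both relevant entries are assigned and differ.  All earlier indices are equal in every completion, so the decision coincides with the final lexicographic comparison between the completion and its reverse complement.  A further point to handle is that the grouped same-orbit masks must also catch a collision between the two new dots of a transposition and each other's vectors in the same step.  Invariant~3's pending mask covers this.
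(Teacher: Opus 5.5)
Your proposal is correct and follows the paper's route: the paper derives soundness from invariants 1--3, and completeness and uniqueness from invariants 4--5 together with mate regeneration, while you unpack each pruning rule and the canonical-representative argument in more detail. One wording point: the $RC$ scan decides only when every earlier index has both entries assigned and equal, not at the first assigned-and-differing index when an earlier entry is still unassigned; your remark that earlier indices agree in every completion already presupposes this.
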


\begin{proof}
By invariants 1--3 every accepted leaf is an involution with distinct
displacement vectors.  By invariants 4--5 no valid canonical branch is
pruned and no branch is visited twice; the reverse complement of each
canonical leaf is regenerated at output when distinct.
\end{proof}

\subsection{Validation}
\label{app:validation}

\begin{itemize}
\item \textbf{Counter equality.} The 64-bit and 128-bit implementations
agree on every instrumented counter, not only on solution counts.  Each
entry below is the common value produced by both implementations.
``Arrays'' counts oriented outputs.  A recursive state is a partial
involution entered by the search.  A candidate orbit is a proposed fixed
point or transposition, and a locally valid orbit is one that passes the
immediate difference, center, and gap checks.  The final two columns count
locally valid orbits subsequently removed by the lookahead and
reverse-complement tests.

\begin{center}
\begin{tabular}{rrrrrrr}
\toprule
 & & recursive & candidate & locally valid & lookahead & $RC$ \\
$n$ & arrays & states & orbits & orbits & prunes & prunes \\
\midrule
12 & 34 & 2{,}379 & 5{,}241 & 5{,}119 & 2{,}472 & 269 \\
16 & 40 & 57{,}549 & 132{,}318 & 128{,}139 & 65{,}419 & 5{,}172 \\
20 & 8 & 1{,}590{,}471 & 3{,}691{,}067 & 3{,}558{,}475 & 1{,}849{,}278 & 118{,}727 \\
\bottomrule
\end{tabular}
\end{center}

\item \textbf{Shard union.} At order 20, all 4{,}052 depth-3 shards
completed and their union equals the monolithic solver's array set.
\item \textbf{Sanitizers.} An order-12 run completed under AddressSanitizer
and UndefinedBehaviorSanitizer without a report.
\item \textbf{Reference counters.} Before each campaign, every production
configuration reproduced the order-16 reference counters (40 arrays,
57{,}549 states, 132{,}318 candidate tests).
\item \textbf{Independent leaf validation.} Every reported array was
reconstructed from the raw result files and re-checked by separate code
for permutation completeness, the involution identity, and signed
Costas-difference uniqueness.
\end{itemize}

\subsection{Production runs}

The CPU censuses used two lookahead configurations.  At orders 37 and
38, the solver checked the next two unassigned rows when at most eight
rows remained.  At orders 39 and 40, it checked the next four rows when
at most nine remained.  The archived manifests record the source,
executable, compiler configuration, and execution environment for every
run.

\begin{center}
\begin{tabular}{lrrrr}
\toprule
 & $n=37$ & $n=38$ & $n=39$ & $n=40$ \\
\midrule
depth-3 shards & 35{,}552 & 38{,}891 & 42{,}431 & 46{,}182 \\
failed shards & 0 & 0 & 0 & 0 \\
search states & $8.77\times 10^{12}$ & $2.06\times 10^{13}$ & $2.72\times 10^{13}$ & $6.39\times 10^{13}$ \\
candidate tests & $1.59\times 10^{13}$ & $3.73\times 10^{13}$ & $5.22\times 10^{13}$ & $1.23\times 10^{14}$ \\
reported arrays & 4 & 0 & 16 & 2 \\
\bottomrule
\end{tabular}
\end{center}

\subsection{GPU implementation and validation}
\label{app:gpu}

A CUDA implementation re-enumerated orders 37--40 and produced the
censuses at orders 41 and 42.  It uses the same orbit tree, difference
masks, center and gap constraints,
reverse-complement canonicalization with mate regeneration, and four-row
lookahead activated when at most nine rows remain, with one warp owning
one depth-5 shard subtree and all warp state in shared memory.  The runs
at orders 37--40 are complete
re-enumerations of the CPU censuses; the runs at orders 41 and 42 establish
the new censuses.  Because the branching and pruning rules are identical,
the soundness and completeness argument in Appendix~\ref{app:repro}
applies to the GPU search.  The checks below address faithful execution of
the GPU implementation.

\begin{enumerate}
\item \textbf{Census gates.}  The GPU reproduces the exact censuses at
orders 12, 14 and 16 (34, 46, 40 arrays) in three configurations:
canonicalization off, on, and on with lookahead.
\item \textbf{Order 30.}  The full GPU census (8 arrays) matches a complete
census by the CPU solver of Section~\ref{sec:method}.
\item \textbf{Order 34.}  The full GPU census gives two oriented arrays,
corresponding to the single diagonal equivalence class in the published
record \cite{RickardDrakakis2009Symmetry}.  It was run twice, at shard
depths 4 and 5, with identical output.
\item \textbf{Complete CPU comparison.}  The depth-5 GPU runs at orders
37--40 completed every shard and reproduced the exact canonical leaf sets,
and hence the oriented array sets, of the depth-3 CPU censuses.
\item \textbf{Completion records.}  Each of the six runs has a per-shard
completion bitmap with every bit set.  The shard counts are given in
Table~\ref{tab:censusgpu}.
\item \textbf{Cross-verification.}  The subtree containing the order-42 Rickard--Golomb pair was
re-run by the CPU solver, which reports the identical pair
(Section~\ref{sec:sporadic}).
\end{enumerate}

\begin{table}[ht]
\centering
\begin{tabular}{rrrrr}
\toprule
order & depth-5 shards & search nodes & wall time (h) & arrays \\
\midrule
37 & 18{,}362{,}447 & $1.82\times 10^{13}$ & 1.39 & 4 \\
38 & 21{,}812{,}062 & $4.29\times 10^{13}$ & 3.31 & 0 \\
39 & 25{,}757{,}502 & $1.01\times 10^{14}$ & 7.75 & 16 \\
40 & 30{,}263{,}534 & $2.36\times 10^{14}$ & 18.1 & 2 \\
\midrule
41 & 35{,}374{,}540 & $5.52\times 10^{14}$ & 43.5 & 12 \\
42 & 41{,}163{,}548 & $1.29\times 10^{15}$ & 102.4 & 4 \\
\bottomrule
\end{tabular}
\caption{Complete GPU execution records.  Orders 37--40 are full
re-enumerations of the CPU censuses; orders 41 and 42 are census runs.
Every shard completed, as certified by the per-shard completion bitmaps.}
\label{tab:censusgpu}
\end{table}

The GPU runs used one NVIDIA GeForce RTX 5090 with CUDA 12.8.

\subsection{Artifacts}
\label{app:artifacts}

Every campaign stored SHA-256 checksums for source, executable, runner,
shard lists, and atomic results; every collected manifest was rechecked.
The companion repository distributes the exact CPU and GPU source
revisions and runners, the six censuses, compressed GPU run logs and
completion bitmaps, historical checksum records, compact validation and
measurement tables, the programs used for the model, and the complete
fixed-seed exact and Monte Carlo series with standard errors, fit inputs,
and fitted coefficients.  Generated shard
frontiers, atomic shard outputs, deployment archives, and compiled binaries
are not distributed; the frontiers can be regenerated from the included
CPU source.\footnote{\url{https://github.com/bogdan27182/costas-md-paper}}

\subsection{Computations for the model}
\label{app:model-computation}

The exponent $\lambda_{\mathrm{sym}}$ was computed exactly for
$n\leq 18$ by enumeration over all $I(n)$ involutions.  The Monte Carlo
computations used $K=2\times 10^{5}$ samples per order at $n=19$--$48$ and
$K=2\times 10^{6}$ samples per order at $n=40$--$60$.  An additional
$K=2\times10^6$ run at $n=19$--$23$ checked the Monte Carlo sensitivity of
the clumping fit.  All runs used a fixed seed.  The sampler draws uniform
involutions using the
exact ratio $I(m-1)/I(m)$, and the resulting standard errors are below
$0.08$.  The general-permutation clumping comparison uses the exact first
moment and the published census counts at orders 10--25.  The companion
repository contains the moment and fitting programs, census inputs, the
complete numerical series, standard errors, fit inputs, and fitted
coefficients.

For the class $(k,-k)$, the occupancy $m_k$ counts stride-$k$ index pairs
whose dots share a center sum.  Each stride offers $n-k$ index pairs.
Among the $\sim n^2$ generic assignments of the two values, $n-k$ have
the required difference.  The internal transposition is a separate
configuration of probability of order $1/n$, and therefore has weight $n$
on this $n^2$ scale.  Thus
\[
\mathbb{E}[m_k]
=(n-k)\frac{n+(n-k)}{n^{2}}+\text{lower order}
=\frac{(n-k)(2n-k)}{n^{2}}+\text{lower order}.
\]
The diagonal classes $(k,k)$ obey the same count, with two fixed points
replacing the internal transposition.  Treating the occupancies as independent
and summing over both families and all strides, with $j=n-k$, gives
\begin{align*}
\mathbb{E}[W_{\mathrm{self}}]
&\approx \sum_{k=1}^{n-1}\frac{\bigl[(n-k)(2n-k)\bigr]^{2}}{n^{4}}\\
&=\sum_{j=1}^{n-1}\frac{j^{2}n^{2}+2nj^{3}+j^{4}}{n^{4}}
=\frac{31}{30}n+O(1).
\end{align*}
The Poisson approximation does not account for the whole linear term.
The occupancy indicators are correlated by the same transpose closure
that drives Lemma~\ref{lem:antidiag}.  If rows $(i,i+k)$ carry values
$(a,a-k)$, then rows $(a-k,a)$ carry values $(i+k,i)$, which is another
occurrence of the same class.  The resulting pairing of generic
occurrences motivates a covariance correction of
$\mathbb{E}[m_{\mathrm{gen}}]/2$ per class.  Summed over both families,
the heuristic correction is
$\sum_k(n-k)^2/n^2=n/3+O(1)$.  The modeled self-conjugate linear
coefficient is therefore
\[
c_{\mathrm{self}}=\frac{31}{30}+\frac13=\frac{41}{30}\approx1.367.
\]

Both finite-range fits use the exact values at $n=12,14,16,18$ and the
$K=2\times10^6$ Monte Carlo estimates at every order from 40 through 60.
The exact inputs are the four largest available even orders; this keeps
parity fixed and excludes smaller orders, where omitted terms are relatively
larger.  The fits give each order equal weight because they describe an
incomplete finite-range expansion.  Weighting only by sampling error would
make the exact small-order values dominate.  The separate fit
\[
\mathbb{E}[W_{\mathrm{self}}]=a_1n+a_2\sqrt n+a_3
\]
gives $a_1=1.362968$, $a_2=-2.281745$, and $a_3=0.641259$.  Exact
enumeration at $n=12$ and 14 gives covariance contributions of 2.37 and
2.97, respectively, consistent with the modeled slope of approximately
$1/3$ per order.

For the three-term clumping fit, using $K=2\times10^5$ at orders 19--23
gives a quadratic coefficient of $0.0348$.  Increasing those orders to
$K=2\times10^6$ gives $0.0358$, reported as $0.036$.  One-parameter fits
of the same log-ratio to a multiple of $n^2$ give $0.037$ from the total
counts and $0.033$ from the sporadic-only counts after conversion to the
oriented convention.

\section{Enumerated arrays}
\label{app:arrays}

All permutations are zero-based.

\vspace{2.5em}

\noindent Order 37 (census, 4 arrays):\par\nopagebreak
\begingroup\small
\noindent\begin{tabular}{@{}r@{\hskip 8pt}>{\ttfamily\raggedright\arraybackslash}p{0.9\textwidth}@{}}
\toprule
1 & 13 8 31 28 21 15 32 19 1 14 26 17 20 0 9 5 35 11 36 7 12 4 25 23 33 22 10 29 3 27 34 2 6 24 30 16 18\\
\addlinespace
2 & 18 20 6 12 30 34 2 9 33 7 26 14 3 13 11 32 24 29 0 25 1 31 27 36 16 19 10 22 35 17 4 21 15 8 5 28 23\\
\addlinespace
3 & 18 20 34 30 27 10 35 17 32 21 5 36 16 19 28 22 12 7 0 13 1 9 15 26 33 25 23 4 14 31 3 29 8 24 2 6 11\\
\addlinespace
4 & 25 30 34 12 28 7 33 5 22 32 13 11 3 10 21 27 35 23 36 29 24 14 8 17 20 0 31 15 4 19 1 26 9 6 2 16 18\\
\bottomrule
\end{tabular}
\endgroup

\vspace{2.5em}

\noindent Order 38 (census): none.

\vspace{2.5em}

\noindent Order 39 (census, 16 arrays):\par\nopagebreak
\begingroup\small
\noindent\begin{tabular}{@{}r@{\hskip 8pt}>{\ttfamily\raggedright\arraybackslash}p{0.9\textwidth}@{}}
\toprule
1 & 24 16 10 38 22 15 13 8 7 23 2 28 25 6 35 5 1 19 30 17 31 21 4 9 0 12 32 36 11 33 18 20 26 29 37 14 27 34 3\\
\addlinespace
2 & 35 4 11 24 1 9 12 18 20 5 27 2 6 26 38 29 34 17 7 21 8 19 37 33 3 32 13 10 36 15 31 30 25 23 16 0 28 22 14\\
\addlinespace
3 & 1 0 27 32 36 14 30 9 35 7 24 34 15 13 5 12 23 29 37 25 38 31 26 16 10 19 22 2 33 17 6 21 3 28 11 8 4 18 20\\
\addlinespace
4 & 18 20 34 30 27 10 35 17 32 21 5 36 16 19 28 22 12 7 0 13 1 9 15 26 33 25 23 4 14 31 3 29 8 24 2 6 11 38 37\\
\addlinespace
5 & 28 24 37 4 3 9 13 38 16 5 31 22 25 6 36 29 8 17 34 21 35 19 11 33 1 12 32 30 0 15 27 10 26 23 18 20 14 2 7\\
\addlinespace
6 & 31 36 24 18 20 15 12 28 11 23 38 8 6 26 37 5 27 19 3 17 4 21 30 9 2 32 13 16 7 33 22 0 25 29 35 34 1 14 10\\
\addlinespace
7 & 1 0 15 10 33 30 23 17 34 21 3 16 28 19 22 2 11 7 37 13 38 9 14 6 27 25 35 24 12 31 5 29 36 4 8 26 32 18 20\\
\addlinespace
8 & 18 20 6 12 30 34 2 9 33 7 26 14 3 13 11 32 24 29 0 25 1 31 27 36 16 19 10 22 35 17 4 21 15 8 5 28 23 38 37\\
\addlinespace
9 & 28 1 5 8 17 2 21 38 3 13 31 18 20 9 30 25 33 4 11 37 12 6 36 29 35 15 27 26 0 23 14 10 34 16 32 24 22 19 7\\
\addlinespace
10 & 31 19 16 14 6 22 4 28 24 15 38 12 11 23 3 9 2 32 26 1 27 34 5 13 8 29 18 20 7 25 35 0 17 36 21 30 33 37 10\\
\addlinespace
11 & 27 19 5 34 28 2 38 8 7 15 35 32 17 23 21 9 33 12 30 1 31 14 36 13 26 29 24 0 4 25 18 20 11 16 3 10 22 37 6\\
\addlinespace
12 & 32 1 16 28 35 22 27 18 20 13 34 38 14 9 12 25 2 24 7 37 8 26 5 29 17 15 21 6 3 23 31 30 0 36 10 4 33 19 11\\
\addlinespace
13 & 3 29 6 0 12 5 2 37 17 31 21 26 4 14 13 32 35 8 24 33 25 10 38 36 18 20 11 34 30 1 28 9 15 19 27 16 23 7 22\\
\addlinespace
14 & 16 31 15 22 11 19 23 29 10 37 8 4 27 18 20 2 0 28 13 5 14 30 3 6 25 24 34 12 17 7 21 1 36 33 26 38 32 9 35\\
\addlinespace
15 & 25 29 18 20 38 5 11 37 10 31 8 6 15 34 27 12 23 28 2 33 3 30 26 16 32 0 22 14 17 1 21 9 24 19 13 36 35 7 4\\
\addlinespace
16 & 34 31 3 2 25 19 14 29 17 37 21 24 16 38 6 22 12 8 35 5 36 10 15 26 11 4 23 32 30 7 28 1 27 33 0 18 20 9 13\\
\bottomrule
\end{tabular}
\endgroup

\vspace{2.5em}

\noindent Order 40 (census, 2 arrays):\par\nopagebreak
\begingroup\small
\noindent\begin{tabular}{@{}r@{\hskip 8pt}>{\ttfamily\raggedright\arraybackslash}p{0.9\textwidth}@{}}
\toprule
1 & 0 33 2 17 29 36 23 28 19 21 14 35 39 15 10 13 26 3 25 8 38 9 27 6 30 18 16 22 7 4 24 32 31 1 37 11 5 34 20 12\\
\addlinespace
2 & 27 19 5 34 28 2 38 8 7 15 35 32 17 23 21 9 33 12 30 1 31 14 36 13 26 29 24 0 4 25 18 20 11 16 3 10 22 37 6 39\\
\bottomrule
\end{tabular}
\endgroup

\vspace{2.5em}

\noindent Order 41 (census, 12 arrays):\par\nopagebreak
\begingroup\small
\noindent\begin{tabular}{@{}r@{\hskip 8pt}>{\ttfamily\raggedright\arraybackslash}p{0.9\textwidth}@{}}
\toprule
1 & 2 30 0 29 37 16 34 39 8 24 33 36 17 14 13 23 5 12 38 26 32 27 40 15 9 28 19 21 25 3 1 35 20 10 6 31 11 4 18 7 22\\
\addlinespace
2 & 18 33 22 36 29 9 34 30 20 5 39 37 15 19 21 12 31 25 0 13 8 14 2 28 35 17 27 26 23 4 7 16 32 1 6 24 3 11 40 10 38\\
\addlinespace
3 & 25 13 20 40 35 31 34 17 27 36 23 12 11 1 16 37 14 7 24 28 2 29 26 10 18 0 22 8 19 21 33 5 39 30 6 4 9 15 38 32 3\\
\addlinespace
4 & 37 8 2 25 31 36 34 10 1 35 7 19 21 32 18 40 22 30 14 11 38 12 16 33 26 3 24 39 29 28 17 4 13 23 6 9 5 0 20 27 15\\
\addlinespace
5 & 23 40 36 30 28 37 34 14 20 11 35 9 38 26 7 19 21 39 29 15 8 16 31 0 25 24 13 33 4 18 3 22 32 27 6 10 2 5 12 17 1\\
\addlinespace
6 & 39 23 28 35 38 30 34 13 8 18 37 22 36 7 27 16 15 40 9 24 32 25 11 1 19 21 33 14 2 31 5 29 20 26 6 3 12 10 4 0 17\\
\addlinespace
7 & 30 35 9 19 21 28 34 11 40 2 32 7 39 31 14 17 29 15 23 3 26 4 25 18 33 22 20 38 5 16 0 13 10 24 6 1 37 36 27 12 8\\
\addlinespace
8 & 32 28 13 4 3 39 34 16 30 27 40 24 35 2 20 18 7 22 15 36 14 37 17 25 11 23 26 9 1 33 8 38 0 29 6 12 19 21 31 5 10\\
\addlinespace
9 & 29 12 2 18 14 22 34 40 15 28 26 33 1 25 4 8 17 16 3 23 38 24 5 19 21 13 10 32 9 0 36 39 27 11 6 37 30 35 20 31 7\\
\addlinespace
10 & 33 9 20 5 10 3 34 29 13 1 4 40 31 8 30 27 19 21 35 16 2 17 37 24 23 32 36 15 39 7 14 12 25 0 6 18 26 22 38 28 11\\
\addlinespace
11 & 5 17 37 27 23 0 34 18 12 22 19 21 8 38 14 35 28 1 7 10 26 11 9 4 32 40 20 3 16 30 29 33 24 31 6 15 39 2 13 36 25\\
\addlinespace
12 & 15 4 27 38 1 25 34 9 16 7 11 10 24 37 20 0 8 36 31 29 14 30 33 39 12 5 26 2 32 19 21 18 28 22 6 40 17 13 3 23 35\\
\bottomrule
\end{tabular}
\endgroup

\vspace{2.5em}

\noindent Order 42 (census, 4 arrays; 1 and 2 are the corner augmentations, 3 and 4 the Rickard--Golomb pair of Section~\ref{sec:sporadic}):\par\nopagebreak
\begingroup\small
\noindent\begin{tabular}{@{}r@{\hskip 8pt}>{\ttfamily\raggedright\arraybackslash}p{0.9\textwidth}@{}}
\toprule
1 & 0 38 9 3 26 32 37 35 11 2 36 8 20 22 33 19 41 23 31 15 12 39 13 17 34 27 4 25 40 30 29 18 5 14 24 7 10 6 1 21 28 16\\
\addlinespace
2 & 25 13 20 40 35 31 34 17 27 36 23 12 11 1 16 37 14 7 24 28 2 29 26 10 18 0 22 8 19 21 33 5 39 30 6 4 9 15 38 32 3 41\\
\addlinespace
3 & 19 1 41 25 30 37 40 32 36 15 10 20 39 24 38 9 29 18 17 0 11 26 34 27 13 3 21 23 35 16 4 33 7 31 22 28 8 5 14 12 6 2\\
\addlinespace
4 & 39 35 29 27 36 33 13 19 10 34 8 37 25 6 18 20 38 28 14 7 15 30 41 24 23 12 32 3 17 2 21 31 26 5 9 1 4 11 16 0 40 22\\
\bottomrule
\end{tabular}
\endgroup

\bibliographystyle{plainnat}
\bibliography{references}

\end{document}